\documentclass[10pt]{amsart} 
\usepackage{amsmath,amssymb}
\usepackage{graphicx}
\usepackage{float}
\usepackage{multicol}
\usepackage{ascmac}
\usepackage{mathtools}
\usepackage{bm}
\usepackage{amscd}
\usepackage{comment}
\usepackage{mathrsfs}
\usepackage[all]{xy}
\usepackage{color}
\usepackage{shuffle}

\usepackage{cases}
\usepackage{paralist}
\usepackage{hyperref}

\definecolor{Mycolor}{cmyk}{ 1.00, 0.50, 0.00, 0.0}  
\hypersetup{%
 setpagesize=false,%
 bookmarks=true,%
 bookmarksdepth=tocdepth,%
 bookmarksnumbered=true,%
 colorlinks=true,
 linkcolor=Mycolor,   
 citecolor=blue,   
 pdftitle={},%
 pdfsubject={},%
 pdfauthor={},%
 pdfkeywords={}}

\title{On adjoint double shuffle relations and adjoint reduction relations}
\author{Takumi Anzawa}
\email{atakumi86@gmail.com}
\address{
Academic Support Center, Kogakuin University,
2665-1, Nakano-cho, Hachioji,
Tokyo, 192-0015, Japan
}
\date{\today} 
\usepackage[margin=35mm]{geometry}
\theoremstyle{definition}
\newtheorem{df}{Definition}[section]

\newtheorem{prop}[df]{Proposition}
\newtheorem{lem}[df]{Lemma}
\newtheorem{cor}[df]{Corollary}
{\theoremstyle{remark}
\newtheorem{rmk}[df]{Remark}}
\newtheorem{thm}[df]{Theorem}

\newtheorem{qu}[df]{$ \pmb{Question}$}
\newtheorem{mt}{Main\;\;Theorem}

\DeclareMathOperator{\id}{id}

\DeclareMathOperator{\wt}{wt}

\newcommand{\ide}[1]{\mathfrak{#1}}

\newcommand{\del}{\partial}

\newcommand{\cotimes}{\widehat{\otimes}}
\newcommand{\cast}{\circledast}
\newcommand{\ad}{\mathfrak{ad}}
\newcommand{\Ad}{\mathrm{Ad}}
\newcommand{\Fad}{\mathfrak{F}_2^{\ad(x_1)}}
\newcommand{\Fadred}{\mathfrak{F}_2^{\ad(x_1), \red}}
\newcommand{\FAd}{\mathfrak{F}_2^{\Ad(x_1)}}
\newcommand{\cFad}{\widehat{\mathfrak{F}}_2^{\ad(x_1)}}
\newcommand{\cFadred}{\widehat{\mathfrak{F}}_2^{\ad(x_1),\red}}

\newcommand{\red}{\mathrm{red}}
\newcommand{\corr}{\mathrm{corr}}

\newcommand{\engMonth}{
	\ifcase\month
		\or January 
		\or February 
		\or March 
		\or April
		\or May 
		\or June
		\or July 
		\or August 
		\or September
		\or October 
		\or November 
		\or December\fi
} 
\newcommand{\fullday}{
	\engMonth \, \the\day, \the\year	
} 

\newcommand{\Z}{\mathbb{Z}}
\newcommand{\Q}{\mathbb{Q}}

\newcommand{\dmr}{\mathfrak{dmr}}
\newcommand{\addmr}{\mathfrak{addmr}}
\let\F\relax
\newcommand{\F}{\mathfrak{F}}

\newcommand{\DMR}{\mathrm{DMR}}
\newcommand{\AdDMR}{\mathrm{AdDMR}}

\usepackage{lineno}

\numberwithin{equation}{section}

\allowdisplaybreaks[1]
\begin{document}
\maketitle
\begin{abstract}
In this paper, we give a positive answer to Jarossay's question under the adjoint reduction relations.
Namely, we prove that the extended double shuffle relations together with the relation $\!\!\!\mod \pi^2$ are equivalent to the adjoint double shuffle relations together with the adjoint reduction relations.

\end{abstract}

\tableofcontents

\section{Introduction}

The \emph{extended double shuffle relations} are $\Q$-algebraic relations involving two bilinear operations, called the shuffle product and the harmonic product, which were formulated independently in \cite{Ihara-Kaneko-Zagier} and \cite{Racinet}. They are known as one of the main families of relations satisfied by multiple zeta values, and all algebraic relations among multiple zeta values over $\Q$ are expected to follow from the extended double shuffle relations. In \cite{Racinet}, Racinet defined an affine scheme $\DMR$ whose $R$-points are given by imposing the extended double shuffle relations over each commutative associative $\Q$-algebra $R$. In particular, its closed subscheme $\DMR_0$ is an affine group scheme, and $\DMR$ is a $\DMR_0$-torsor.

The \emph{adjoint double shuffle relations} are an \emph{adjoint analogue} of the extended double shuffle relations described above, and are known to be satisfied by adjoint multiple zeta values. The study of these relations is motivated by the Kaneko--Zagier conjecture \cite{Kaneko-Zagier} and the unpublished Akagi--Hirose--Yasuda conjecture. The Kaneko--Zagier conjecture predicts that two variants of multiple zeta values, namely \emph{finite multiple zeta values} and \emph{symmetric multiple zeta values}, satisfy the same algebraic relations over $\Q$. Symmetric multiple zeta values are constructed from certain polynomial expressions in multiple zeta values, and they span the unital $\Q$-algebra generated by multiple zeta values (\cite{Yasuda}). On the other hand, finite multiple zeta values are obtained by reducing certain partial sums of the infinite series defining multiple zeta values modulo primes.
As symmetric multiple zeta values are just expressed in terms of multiple zeta values, Akagi, Hirose, and Yasuda conjectured that finite multiple zeta values could be described by explicit formulas involving $p$-adic multiple zeta values. This conjecture was subsequently proved by Jarossay (\cite{Jarossay2020}). The adjoint operation played a role in his proof. Adjoint multiple zeta values are the quantities appearing as coefficients of the adjoint of the noncommutative generating series of multiple zeta values. They are known to generalize symmetric multiple zeta values.

In a series of his works, Jarossay defined an affine scheme $\AdDMR$ whose $R$-points are given by imposing the adjoint double shuffle relations for each commutative associative $\Q$-algebra $R$. He constructed an embedding
\[
\Ad(x_1):\DMR_0\hookrightarrow\AdDMR_0
\]
and asked whether this map is surjective or not \cite{Jarossay}.\footnote{It was pointed out in \cite{Anzawa-4} that this question requires a slight modification. Throughout this paper, $\AdDMR_0$ denotes the scheme denoted by $\AdDMR_0\cap\FAd$ in \cite{Anzawa-4}.}

In \cite{Anzawa-4}, we considered the tangent space $\widehat{\addmr}_0$ of $\AdDMR_0$ at its $\Q$-point $x_1$ and studied a necessary condition for a positive answer to the above question. We proved the following theorem.
\begin{thm}[{\cite[Main Theorem~3]{Anzawa-4}}]\label{thm:Anzawa-main}
The $\Q$-linear space $\widehat{\addmr}_0\cap\cFadred$ forms a Lie algebra together with some bracket.
\end{thm}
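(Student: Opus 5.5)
The natural route is to transport the known Lie algebra structure on the double shuffle Lie algebra $\widehat{\dmr}_0$ (Racinet's tangent space of $\DMR_0$ at the identity, with the Ihara bracket) along the infinitesimal version of Jarossay's embedding $\Ad(x_1)$. Differentiating $\Ad(x_1)(\Phi)=\Phi^{-1}x_1\Phi$ at $\Phi=1$ gives the linear map $\ad(x_1)\colon\psi\mapsto[x_1,\psi]$ from $\widehat{\dmr}_0$ into $\widehat{\addmr}_0$. Its image should lie in the reduced part $\cFadred$, because of the normalization built into $\widehat{\dmr}_0$: the coefficients of $x_0$ and $x_1$ vanish.

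The steps, in order, are as follows.

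\begin{enumerate}
\item Fix the candidate bracket on $\Fad$. For $f=[x_1,\psi]$ and $g=[x_1,\chi]$, declare $\{f,g\}:=[x_1,\langle\psi,\chi\rangle]$, where $\langle\ ,\ \rangle$ is the Ihara bracket $\langle\psi,\chi\rangle=d_\psi(\chi)-d_\chi(\psi)+[\psi,\chi]$, with $d_\psi$ the derivation sending $x_0\mapsto0$ and $x_1\mapsto[x_1,\psi]$.
\item Check that the bracket can be written intrinsically in terms of $f,g$ alone. One has $d_\psi(x_1)=f$, so $\{f,g\}$ should be expressible through derivations determined by $f$. Then the bracket is well defined on $\cFadred$ irrespective of the choice of preimage; the kernel of $\ad(x_1)$ on Lie series is $\Q x_1$, which is killed by the reduction condition.
\item Verify the Lie axioms. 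Antisymmetry is immediate. The Jacobi identity is inherited from the Ihara bracket, since $\ad(x_1)$ intertwines $\langle\ ,\ \rangle$ with $\{\ ,\ \}$ by construction.
\item Show closedness: for $f,g\in\widehat{\addmr}_0\cap\cFadred$, the bracket $\{f,g\}$ again satisfies the linearized adjoint double shuffle relations.
\end{enumerate}

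Step 4 is the main obstacle. One cannot simply use that $f,g$ come from $\widehat{\dmr}_0$, since that would be exactly the surjectivity question. So the closure has to be proved directly, imitating Racinet's proof that $\widehat{\dmr}_0$ is closed under the Ihara bracket.

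Concretely, I would use two facts:
\begin{itemize}
\item The shuffle-type condition, $f$ primitive for $\Delta_\shuffle$, is preserved because the derivations $d_\psi$ are coderivations.
\item For the harmonic side, I would introduce the adjoint analogue of Racinet's twisted action on $\Q\langle Y\rangle$ through the map $\pi_Y$ and the correction term $\psi_\corr$. The key point is to show that the linearized adjoint harmonic condition on $f$ is equivalent to a $\Delta_*$-primitivity statement for an associated series, which makes the corresponding derivation a coderivation of $\Delta_*$.
\end{itemize}

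Once that equivalence (possibly using the reduction condition to dispose of the correction terms) is established, closure follows as in Racinet's argument. Therefore $\widehat{\addmr}_0\cap\cFadred$ is a Lie subalgebra, which is the assertion of Theorem~\ref{thm:Anzawa-main}.
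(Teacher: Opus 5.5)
Your proposal has a genuine gap. Step~4 is the whole content of the statement, and it is only announced, not carried out. The ``key point'', that the linearized adjoint harmonic condition on $f=[x_1,\psi]$ is equivalent to $\Delta_*$-primitivity of some associated series, comes with neither a definition of that series nor an argument. ``Closure follows as in Racinet's argument'' is not a proof.

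Moreover, Step~4 only targets the adjoint double shuffle relations. You never show that $\{f,g\}_1$ satisfies the adjoint reduction relation $\Psi^{11}+\Psi^{10}+\Psi^{01}=0$. This reflects a misreading of $\cFadred$ that is already visible in your first paragraph. The vanishing of $\langle\psi\mid x_0\rangle$ and $\langle\psi\mid x_1\rangle$ only gives the normalization ``$\psi$ has no linear term''. The reduction relation is a substantive identity, and even the inclusion $\ad(x_1)(\dmr_0)\subset\Fadred$ needs a real proof. The paper derives it from $\Delta_*(\psi_*)=\psi_*\otimes1+1\otimes\psi_*$ using:
\begin{itemize}
\item the identity \eqref{eq:reduced-com};
\item the fact that $\widetilde{\Delta}_*(\pi_Y(\psi))=-\widetilde{\Delta}_*(\psi_{\corr})$ commutes with $\Delta_*(y_1)$;
\item a comparison of the $(\,\cdot\,)\otimes y_1$ components;
\item Racinet's vanishing of $\langle\psi\mid x_0^nx_1\rangle$ for odd $n$.
\end{itemize}

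The missing idea is that the route you set aside is the one that works. Once the reduction relation is imposed, surjectivity of $\ad(x_1)\colon\dmr_0\to\addmr_0\cap\Fadred$ can be proved. Closure then follows by transport of structure, since $\ad(x_1)$ intertwines the Ihara bracket with $\{\cdot,\cdot\}_1$ and $\widehat{\dmr}_0$ is a Lie algebra by Racinet. In this paper the statement itself is cited from \cite{Anzawa-4}; Proposition~\ref{prop:ism} supplies exactly this transport-of-structure proof.

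For surjectivity, write $\pi_Y(\psi x_1)=\sum_n y_{n+1}\Psi^{(n)}=\sum_n\Psi_{(n)}y_{n+1}$ and $R_n=\Psi^{(n)}-\Psi_{(n)}$. The argument then runs as follows.
\begin{itemize}
\item From $D(\pi_Y(\psi x_1))=0$ and $D\circ\rho=\rho\circ D$ one gets $(n+1)R_{n+1}+D(R_n)=0$.
\item $R_0$ is primitive by the $T^0$-part of the adjoint harmonic relation, and $D$ is a coderivation, so every $R_n$ is primitive.
\item Apply $\widetilde{\Delta}_*$ to $\sum_n[y_{n+1},\Psi^{(n)}]=-\sum_nR_ny_{n+1}$, and use the reduction relation, propagated to all $k$ by $D$ (Lemma~\ref{lem:com-red}). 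This gives $[\Delta_*(y_1),\widetilde{\Delta}_*(\pi_Y(\psi))]=0$, hence $\widetilde{\Delta}_*(\pi_Y(\psi))\in\Q\langle y_1\rangle^{\otimes2}$.
\item The remaining $y_1^m\otimes y_1^n$ coefficients, computed by coassociativity and the harmonic product, are exactly cancelled by $\psi_{\corr}$. Hence $\psi\in\dmr_0$.
\end{itemize}
Since all the defining conditions are weight-homogeneous, the completed statement for $\widehat{\addmr}_0\cap\cFadred$ follows.
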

Here, $\cFadred$ is the $\Q$-linear space defined by the \emph{adjoint reduction relations} (see \eqref{eq:Fad}), which are $\Q$-linear relations derived from a functional identity satisfied by multitangent functions.\footnote{This space coincides with $\cFad\cap V_{\mathrm{strprty}}$ in \cite{Anzawa-4}.} We expect that
\[
\widehat{\addmr}_0\subset\cFadred,
\]
but have not yet proved this inclusion.

In this paper, we study the adjoint double shuffle relations together with the adjoint reduction relations. Our main theorem is the following.
\begin{mt}\label{mt:ism}
We have
\[
\AdDMR_0\cap\exp(\cFadred)\cong\DMR_0.
\]
\end{mt}

In this paper, we do not use the theory of schemes and just discuss the sets of $R$-points of the relevant schemes.

\begin{center}
\textbf{Acknowledgement}
\end{center}

I am grateful to almost everyone I met in Nagoya, from April 2020 to August 2026, for their help and support.

\begin{center}
\textbf{Use of AI}
\end{center}

The author used ChatGPT 6 Astra and 6.1 Sol to improve the writing and some proofs in this paper.
The main ideas of the proof in the depth-graded case were already obtained by the author around 2024. AI later suggested a simpler proof using the commutativity of $D$ and $\rho$, especially for \ref{eq:com-D-rho} and Lemma~\ref{lem:y1y1}. The author then used this simpler proof as a reference to construct the proof in the filtered case (Proposition~\ref{prop:ism}) given in this paper.

\section{Preliminaries}
We begin by preparing some notation and results which will be used throughout this paper.

\subsection*{The shuffle coproduct}

Let $X := \{x_0,x_1\}$ be the set of letters $x_0$ and $x_1$, $X^*$ the free monoid of words generated by $X$ with empty word $1$, and $\Q\langle X \rangle$ the non-commutative polynomial algebra over $\Q$ generated by $X$.
We define the $\Q$-algebra homomorphism $\Delta_{\shuffle} : \Q\langle X \rangle \rightarrow \Q\langle X \rangle^{\otimes 2}$, which is called \emph{shuffle coproduct}, by
\[
x_i \mapsto x_i \otimes 1 + 1\otimes x_i \qquad(i = 0,1).
\]
We also define the $\Q$-algebra anti-automorphism $S_X$ of $\Q\langle X \rangle$ by
\[
x_i \mapsto -x_i \qquad(i = 0,1).
\]
Then, together with the usual unit and counit maps, the tuple $(\Q\langle X \rangle, \textrm{conc}, \Delta_{\shuffle}, S_X)$ forms a Hopf algebra, where $\textrm{conc}$ denotes the concatenation product on $\Q\langle X \rangle$.

We define a degree, which we call the \emph{weight}, on $X^*$ by $\wt x_i = 1$ for $i = 0,1$, and endow $\Q\langle X \rangle$ with the weight topology,
i.e., 
for $n \in \mathbb{Z}_{\ge 0}$, we denote by $\Q\langle X \rangle_n$ the $\Q$-linear subspace of $\Q\langle X \rangle$ consisting of homogeneous elements of weight $n$. A fundamental system of neighborhoods of $0$ in $\Q\langle X \rangle$ is given by
\[
\bigoplus_{n\ge m} \Q\langle X \rangle_n
\]
for $m \in \mathbb{Z}_{\ge 0}$.
We note that the operations $\textrm{conc}$, $\Delta_{\shuffle}$, and $S_X$ are continuous with respect to the weight topology.
Thus, these operations extend continuously to the completion
\[
\Q\langle\langle X \rangle\rangle
:=
\varprojlim_{m}
\Q\langle X \rangle
\Big/
\bigoplus_{n\ge m}\Q\langle X \rangle_n.
\]
In particular, $\Delta_{\shuffle}$ extends to a continuous map
$\Q\langle\langle X \rangle\rangle \rightarrow \Q\langle\langle X \rangle\rangle^{\widehat{\otimes}2}$.
By abuse of notation, we use the same symbols $\Delta_{\shuffle}$ and $S_X$ for their continuous extensions.

We define $\langle \cdot \mid \cdot \rangle : \Q\langle \langle X \rangle\rangle \cotimes  \Q\langle X \rangle \rightarrow \Q$ given by
\[
\langle \Phi \mid w \rangle = \text{the coefficients of $\Phi$ on $w$ for $\Phi \in \Q\langle\langle  X \rangle\rangle$ and $w \in X^*$.}
\]
With this notation, every $\Phi \in \Q\langle\langle X \rangle\rangle$ can be written as
\[
\Phi = \sum_{w \in X^*} \langle \Phi \mid w \rangle w.
\]

For $\Q$-algebra $R$, we have
\[
R\langle X \rangle = R \otimes \Q\langle X \rangle, \qquad R \langle\langle X \rangle \rangle = R \cotimes \Q\langle\langle X \rangle \rangle.
\]
We use the above symbols or operations for these scalar extensions.

\subsection*{The completed free Lie algebra and its exponential}
In this paper, we mainly consider Lie algebras that are Lie subalgebras of the (completed) Lie algebra whose underlying space coincides with the (completed) free Lie algebra with $2$ generators.
In this subsection, we briefly recall some basic facts about free Lie algebras that will be used throughout this paper.

We define
\[
\F_2
:=
\{\psi \in \Q\langle X \rangle
\mid
\Delta_{\shuffle}(\psi)
=
\psi \otimes 1 + 1 \otimes \psi
\}.
\]
Then $\F_2$ is a Lie algebra with respect to the usual commutator bracket given by
\[
[f,g] := fg-gf
\qquad
(f,g\in \F_2).
\]
It is known that $\F_2$ coincides with the $\Q$-linear subspace of $\Q\langle X\rangle$ spanned by iterated commutators of $x_0$ and $x_1$ \cite[Theorem~1.4]{Reutenauer}.
Moreover, $\F_2$ is the free Lie algebra generated by $x_0$ and $x_1$ \cite[Theorem~0.5]{Reutenauer}.

We endow $\F_2$ with the relative topology of $\Q\langle X\rangle$, and denote by $\widehat{\F}_2$ its completion.
Then
\[
\widehat{\F}_2
=
\{\psi \in \Q\langle\langle X\rangle\rangle
\mid
\Delta_{\shuffle}(\psi)
=
\psi\otimes 1+1\otimes\psi
\}
\]
by \cite[Theorem~3.1]{Reutenauer} and $\widehat{\ide{F}}_2$ forms a pronilpotent Lie algebra.

In general, there is an one-to-one correspondence between pronilpotent Lie algebras and prounipotent groups.
Let
\[
\exp :
\prod_{m\geq 1}\Q\langle X\rangle_m
\longrightarrow
1+\prod_{m\geq 1}\Q\langle X\rangle_m;
\qquad
w\longmapsto
\sum_{n\geq 0}\frac{w^n}{n!}.
\]
We define
\[
\exp(\widehat{\F}_2)
:=
\{\exp(\psi)\mid \psi\in\widehat{\F}_2\}.
\]
Then, by \cite[Theorem~3.2]{Reutenauer},
\[
\exp(\widehat{\F}_2)
=
\{
\Phi\in\Q\langle\langle X\rangle\rangle
\mid
\langle \Phi \mid 1\rangle =1,\ 
\Delta_{\shuffle}(\Phi)=\Phi\widehat{\otimes}\Phi
\}.
\]
Moreover, $\exp(\widehat{\F}_2)$ forms a group under the concatenation product \cite[Corollary~3.3]{Reutenauer}.
Its corresponding pronilpotent Lie algebra is $\widehat{\F}_2$.

We introduce another Lie bracket on $\widehat{\F}_2$ and the corresponding group law on $\exp(\widehat{\F}_2)$ (cf. \cite[\S~3.1]{Racinet}). For $\Psi \in \Q\langle X\rangle$, let $d_{\Psi}$ be a derivation of $\Q\langle X\rangle$ defined by
\[
d_{\Psi}(x_0)=0,\qquad d_{\Psi}(x_1)=\Psi.
\]
For $\psi_1,\psi_2\in\Q\langle\langle X\rangle\rangle$, we define 
\[
\{\psi_1,\psi_2\} := s_{\psi_1}(\psi_2) - s_{\psi_2}(\psi_1).
\]
Here,
\[
s_{\psi_1}: \Q\langle X \rangle \rightarrow \Q\langle X \rangle \qquad ; \qquad \psi \mapsto d_{[x_1,\psi_1]}(\psi) + \psi_1\psi.
\]
Then $\F_2$ is closed under $\{\mathchar`-,\mathchar`-\}$, and \[ (\F_2,\{\mathchar`-,\mathchar`-\}) \] forms a Lie algebra.
We call $\{\mathchar`-,\mathchar`-\}$ the \emph{Ihara bracket}.
Since $s_{\psi}$ is continuous by the weight topology on $\ide{F}_2$, $s_{\psi}$ also extends to its completion $\widehat{\F}_2$.
We also denote the completed extension of $s_{\psi}$ and $\{\mathchar`-,\mathchar`-\}$ by same symbols.

For $\Phi\in \prod_{m\ge 1}\Q\langle X \rangle_m$, let $\kappa_\Phi$ be the continuous algebra endomorphism of $\Q\langle\langle X\rangle\rangle$ defined by 
\[
\kappa_\Phi(x_0)=x_0,\qquad \kappa_\Phi(x_1)=\Phi.
\]
For $\phi_1,\phi_2\in\exp(\widehat{\F}_2)$, we define 
\[
\phi_1\circledast \phi_2 := \phi_1\kappa_{\phi_1^{-1}x_1\phi}(\Phi_2). 
\] 
Then $\exp(\widehat{\F}_2)$ is closed under $\circledast$, and $ (\exp(\widehat{\F}_2),\circledast) $ forms a prounipotent group.
We call $\circledast$ the \emph{Ihara product}.
The Lie algebra associated with this group law is $ (\widehat{\F}_2,\{\mathchar`-,\mathchar`-\})$ and there is a bijection:
\begin{align*}
\exp^{\cast} : \widehat{\F_2} \rightarrow \exp(\F_2)\qquad;\qquad \phi \mapsto \sum_{n\ge 0} \frac{1}{n!} s_{\phi}^n(1). 
\end{align*}
Note that the exponential map $\exp^{\cast}$ is functorial with respect to scalar extensions of $\Q$-algebra.

These constructions extend to every commutative associative $\Q$-algebra $R$. Put
\[
\exp^{\cast}(\widehat{\F_2})(R) := \{\exp^{\circledast}(\psi) \mid \psi \in R\cotimes \widehat{\F_2}  \}.
\]
Then, $(R\cotimes \widehat{\F_2}, \{\mathchar`-,\mathchar`-\})$ forms a pronilpotent Lie algebra, $\exp(\widehat{\F_2}(R), \ast)$ forms a prounipotent group, and there is a bijection:
\[
\exp^{\cast} : R\cotimes \widehat{\F_2} \rightarrow \exp(\widehat{\F}_2)(R) \qquad;\qquad \phi \mapsto \sum_{n\ge 0} \frac{1}{n!} s_{\phi}^n(1). 
\]

\subsection*{The harmonic coproduct}

In this subsection, we review the theory of the harmonic coproduct.

Let $Y := \{y_n\}_{n \in \mathbb{Z}_{>0}}$ be the set of letters indexed by $\mathbb{Z}_{>0}$, $Y^*$ the free monoid of words generated by $Y$ with the empty word $1$, and $\Q\langle Y \rangle$ the non-commutative polynomial algebra over $\Q$ generated by $Y$.

We define the $\Q$-algebra homomorphism $\Delta_* : \Q\langle Y \rangle \rightarrow \Q\langle Y \rangle^{\otimes 2}$, called the \emph{harmonic coproduct}, by
\[
\Delta_*(y_k)
=
y_k\otimes 1
+
1\otimes y_k
+
\sum_{\substack{i+j=k\\i,j>0}} y_i\otimes y_j.
\]
Then, together with the usual unit and counit maps and the unique antipode, $(\Q\langle Y \rangle,\textrm{conc},\Delta_*)$ forms a Hopf algebra.

We define a degree, which we call the \emph{weight}, on $Y^*$ by $\wt y_k=k$ for $k\in\mathbb{Z}_{>0}$, and endow $\Q\langle Y\rangle$ with the weight topology. More precisely, for $n\in\mathbb{Z}_{\geq 0}$, we denote by $\Q\langle Y\rangle_n$ the $\Q$-linear subspace of $\Q\langle Y\rangle$ consisting of homogeneous elements of weight $n$. A fundamental system of neighborhoods of $0$ in $\Q\langle Y\rangle$ is given by
\[
\bigoplus_{n\geq m}\Q\langle Y\rangle_n
\]
for $m\in\mathbb{Z}_{\geq 0}$.

We note that the concatenation product and $\Delta_*$ are continuous with respect to the weight topology. Thus, these operations extend continuously to the completion
\[
\Q\langle\langle Y\rangle\rangle
:=
\varprojlim_m
\Q\langle Y\rangle
\Big/
\bigoplus_{n\geq m}\Q\langle Y\rangle_n.
\]
In particular, $\Delta_*$ extends to a continuous map
$\Q\langle\langle Y\rangle\rangle
\rightarrow
\Q\langle\langle Y\rangle\rangle^{\widehat{\otimes}2}$.
By abuse of notation, we use the same symbol $\Delta_*$ for its continuous extension.

We define the pairing
\[
\langle \,\cdot\, \mid \,\cdot\, \rangle
:
\Q\langle\langle Y\rangle\rangle
\times
\Q\langle Y\rangle
\longrightarrow
\Q
\]
by letting $\langle \Phi\mid w\rangle$ denote the coefficient of $w$ in $\Phi$ for $\Phi\in\Q\langle\langle Y\rangle\rangle$ and $w\in Y^*$. With this notation, $\Phi\in\Q\langle\langle Y\rangle\rangle$ can be written as
\[
\Phi
=
\sum_{w\in Y^*}
\langle \Phi\mid w\rangle w.
\]

We define the $\Q$-linear map
\[
\pi_Y:\Q\langle X\rangle\longrightarrow\Q\langle Y\rangle
\]
by
\[
\pi_Y
\left(
x_0^{k_1-1}x_1
\cdots
x_0^{k_r-1}x_1x_0^l
\right)
=
\begin{cases}
y_{k_1}\cdots y_{k_r} & \text{if } l=0,\\
0 & \text{if } l>0,
\end{cases}
\]
for $r\geq 1$, $k_1,\ldots,k_r\in\mathbb{Z}_{>0}$, and $l\in\mathbb{Z}_{\geq 0}$, together with $\pi_Y(1)=1$.
Since $\pi_Y$ is continuous with respect to the weight topologies on $\Q\langle X\rangle$ and $\Q\langle Y\rangle$, it extends uniquely to a continuous map
\[
\pi_Y:
\Q\langle\langle X\rangle\rangle
\longrightarrow
\Q\langle\langle Y\rangle\rangle.
\]
By abuse of notation, we denote this continuous extension again by $\pi_Y$.

To conclude this subsection, for $\Q$-algebra $R$, we have
\[
R\langle Y \rangle = R \otimes \Q\langle Y \rangle, \qquad R \langle\langle Y \rangle \rangle = R \cotimes \Q\langle\langle Y \rangle \rangle.
\]
We use the above symbols or operations for these scalar extensions.

\section{Double shuffle Lie algebra and group}
In this section, we briefly review the double shuffle Lie algebra and the double shuffle group introduced by Racinet \cite{Racinet}.
\subsection*{The double shuffle Lie algebra}
We first define some Lie algebra following \cite[\S\S~3.3.1 and~3.3.8]{Racinet}.
\begin{df}\label{df:ds}
We define the \emph{double shuffle Lie algebra} $\dmr_0$ (see next theorem) to be the $\Q$-linear subspace of $\F_2$ consisting of elements $\phi$ satisfying the following conditions:
\begin{enumerate}
\item\label{df:ds-1} $\langle \phi \mid 1 \rangle = 0$.
\item\label{df:ds-2} $\langle \phi \mid x_0 \rangle = \langle \phi \mid x_1 \rangle = 0$.
\item\label{df:ds-3} $\langle \phi \mid x_0x_1 \rangle = 0$.
\item\label{df:ds-4} $\Delta_*(\phi_*) = \phi_* \otimes 1 + 1\otimes \phi_*$, where
\begin{align*}
\phi_*
:=&
\pi_Y(\phi)
+
\phi_{\corr}\\
\phi_{\corr}=&
\sum_{n\geq 2}
\frac{(-1)^{n-1}}{n}
\langle \phi \mid x_0^{n-1}x_1 \rangle y_1^n.
\end{align*}
\end{enumerate}
\end{df}
Racinet proved the following result.
\begin{thm}[{Corollary of \cite[Proposition~4.A.i]{Racinet}}]\label{thm:ds-Lie}
The pair
\[
(\dmr_0,\{\mathchar`-,\mathchar`-\})
\]
forms a Lie algebra.
\end{thm}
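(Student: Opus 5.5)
The statement is Racinet's theorem that $\dmr_0$ is closed under the Ihara bracket. My plan is to deduce it from the group-level result \cite[Proposition~4.A.i]{Racinet}: the set $\DMR_0(R)$ of group-like series satisfying the analogous conditions forms a group under $\cast$. I then differentiate at the identity.

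\textbf{Step 1: $\dmr_0$ is the tangent space of $\DMR_0$.} Work over $R=\Q[\varepsilon]/(\varepsilon^2)$, or more flexibly over $R=\Q[t]$. I will show that $\phi\in\widehat{\F}_2$ satisfies conditions \eqref{df:ds-1}--\eqref{df:ds-4} of Definition~\ref{df:ds} if and only if $1+\varepsilon\phi$ lies in $\DMR_0(\Q[\varepsilon])$. The conditions on coefficients of $1,x_0,x_1,x_0x_1$ are linear and match directly. For the harmonic condition, the group-level condition reads $\Delta_*(\Phi_*)=\Phi_*\cotimes\Phi_*$, where
\[
\Phi_*=\exp\Bigl(\sum_{n\ge2}\tfrac{(-1)^{n-1}}{n}\langle\Phi\mid x_0^{n-1}x_1\rangle y_1^n\Bigr)\pi_Y(\Phi).
\]
At first order in $\varepsilon$ this becomes exactly $\phi_*=\pi_Y(\phi)+\phi_{\corr}$ being primitive. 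For the weight-filtered version (elements of $\prod$ rather than only polynomials), I will use that $\exp^{\cast}(t\phi)\in\DMR_0(\Q[t])$ for all $t$ precisely when $\phi\in\dmr_0$. This follows by a degree-in-$t$ comparison, since $\exp^{\cast}$ is functorial and its coefficients are polynomial in $t$.

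\textbf{Step 2: bracket closure.} Take $\phi_1,\phi_2\in\dmr_0$ and set $g_i=\exp^{\cast}(t_i\phi_i)\in\DMR_0(\Q[t_1,t_2])$. By the group property, the commutator
\[
g_1\cast g_2\cast g_1^{-1}\cast g_2^{-1}
\]
lies in $\DMR_0(\Q[t_1,t_2])$. Its expansion has the form $1+t_1t_2\{\phi_1,\phi_2\}+(\text{higher terms in } t_1,t_2)$, because the Lie algebra of $(\exp(\widehat{\F}_2),\cast)$ is $(\widehat{\F}_2,\{\,,\})$. The defining conditions of $\DMR_0$ are preserved under specializing $R$, and their $t_1t_2$-coefficient is linearized exactly as in Step 1. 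Hence $\{\phi_1,\phi_2\}$ satisfies \eqref{df:ds-1}--\eqref{df:ds-4}. Finally, $\{\phi_1,\phi_2\}\in\F_2$ whenever the $\phi_i$ are polynomials, since $\F_2$ is closed under the Ihara bracket. Bilinearity, antisymmetry and the Jacobi identity are inherited from $(\F_2,\{\,,\})$.

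\textbf{Main obstacle.} The delicate point is the linearization of the nonlinear correction term in Step 2. Its exponential involves the coefficients $\langle\Phi\mid x_0^{n-1}x_1\rangle$, and I must check that at order $t_1t_2$ only the linear term $\langle\{\phi_1,\phi_2\}\mid x_0^{n-1}x_1\rangle$ survives. Cross terms such as $\langle\phi_1\mid x_0^{a}x_1\rangle\langle\phi_2\mid x_0^{b}x_1\rangle y_1^{a+b+2}$ could a priori appear. I would handle this in two parts:
\begin{itemize}
\item Restrict to a formal neighbourhood, i.e.\ ideals with $t_1^2=t_2^2=0$. There $g_i=1+t_i\phi_i+\cdots$ and the commutator is exactly $1+t_1t_2\{\phi_1,\phi_2\}$.
\item Observe that a group-like $\Psi=1+t_1t_2\psi$ has correction exponential linear in $t_1t_2$, since $(t_1t_2)^2=0$.
\end{itemize}
This reduces everything to the first-order computation of Step 1.
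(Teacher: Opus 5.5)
Your argument is correct once you work in the truncated ring, but it is not the paper's route. The paper gives no argument beyond the citation. Proposition~4.A.i of \cite{Racinet} is already the Lie-algebra statement, for the completed space $\widehat{\dmr_0}$. The polynomial version follows because conditions (1)--(4) of Definition~\ref{df:ds} are weight-homogeneous ($\phi\mapsto\phi_*$ and $\Delta_*$ preserve weight) and the Ihara bracket is additive in weight. Hence $\dmr_0$ is the direct sum of the homogeneous components of $\widehat{\dmr_0}$ and inherits closure under the bracket.

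You instead start from the group law and differentiate by the commutator trick over $R=\Q[t_1,t_2]/(t_1^2,t_2^2)$. Note that the group law is Theorem~\ref{thm:ds-group} (Racinet's Theorem~I), not Proposition~4.A.i. Your route goes through:
\begin{enumerate}
\item $g_i=1+t_i\phi_i$ lies in $\DMR_0(R)$ by your first-order computation and functoriality along $\varepsilon\mapsto t_i$.
\item One computes $g_1\cast g_2=1+t_1\phi_1+t_2\phi_2+t_1t_2\,s_{\phi_1}(\phi_2)$.
\item The commutator is $\equiv1$ modulo $t_1$ and modulo $t_2$, so it has the form $c=1+t_1t_2w$.
\item For any $b\equiv1$ modulo $(t_1,t_2)$ one has $c\cast b=b+t_1t_2w$, which forces $w=\{\phi_1,\phi_2\}$ exactly.
\item Since $t_1t_2$ is part of a $\Q$-basis of $R$ and squares to zero, the defining conditions linearize as you say.
\end{enumerate}

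Two remarks. First, your claim that $\exp^{\cast}(t\phi)\in\DMR_0(\Q[t])$ for $\phi\in\dmr_0$ follows ``by a degree-in-$t$ comparison'' is not justified. Reducing modulo $t^2$ only gives the converse; the forward direction is an integration statement, essentially Theorem~\ref{thm:ds-group}(3). Your truncated argument never needs it, so define $g_i$ directly in $R$ and drop the version over $\Q[t_1,t_2]$.

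Second, on what each route buys. Yours makes the passage from group to Lie algebra explicit and uses only the group law as a black box. However, it is not independent of the result being proved. Theorem~\ref{thm:ds-group}(3) already identifies $R\cotimes\widehat{\dmr_0}$ as the Lie algebra of $\DMR_0(R)$, and in \cite{Racinet} Proposition~4.A is a step toward Theorem~I. The paper's route is the logically direct one and costs only the homogeneity observation.
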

More precisely, Racinet showed the above theorem in the case of the completed case of $\dmr_0$.
We endow $\dmr_0$ with the topology induced by the weight topology on $\Q\langle X\rangle$, and denote its completion by $\widehat{\dmr_0}$. Then
\[
(\widehat{\dmr_0},\{\mathchar`-,\mathchar`-\})
\]
is a pronilpotent Lie algebra.
\begin{rmk}
The completed double shuffle Lie algebra $\widehat{\dmr_0}$ can equivalently be described as the $\Q$-linear subspace of $\widehat{\F}_2$ consisting of elements $\phi$ satisfying the following conditions:
\begin{enumerate}
\item $\langle \phi \mid 1 \rangle = 0$.
\item $\langle \phi \mid x_0 \rangle = \langle \phi \mid x_1 \rangle = 0$.
\item $\langle \phi \mid x_0x_1 \rangle = 0$.
\item$\Delta_*(\phi_*) = \phi_* \otimes 1 + 1\otimes \phi_*$.
\end{enumerate}
\end{rmk}

\subsection*{The double shuffle group}
In this subsection, we review the double shuffle group.
Throughout this subsection, let $R$ be a commutative $\Q$-algebra.
For $\lambda\in R$, we define the following set. These assignments are functorial in $R$.
\begin{df}\label{df:ds-group}
We define the \emph{double shuffle torsor} $\DMR_{\lambda}(R)$ (see next theorem) to be the subset of $\exp\bigl(\widehat{\F}_2\bigr)(R)$
consisting of elements $\Phi$ satisfying the following conditions:
\begin{enumerate}
\item $\langle \Phi \mid x_0 \rangle = \langle \Phi \mid x_1 \rangle = 0$.
\item
\[
\langle \Phi \mid x_0x_1 \rangle
=
-\frac{\lambda^2}{24}.
\]
\item
\[
\Delta_*(\Phi_*)
=
\Phi_*\otimes\Phi_*,
\]
where
\[
\Phi_*
:=
\exp\left(
\sum_{n\geq 2}
\frac{(-1)^{n-1}}{n}
\langle \Phi \mid x_0^{n-1}x_1\rangle y_1^n
\right)
\pi_Y(\Phi).
\]
\end{enumerate}
We define the \emph{double shuffle group} $\DMR_0(R)$ (also see next theorem) by
\[
\DMR_0(R):=\DMR_{\lambda}(R)\big|_{\lambda=0}.
\]
Equivalently, $\DMR_0(R)$ consists of the elements $\Phi$ satisfying the conditions above together with
\[
\langle \Phi \mid x_0x_1\rangle=0.
\]

\end{df}

Racinet showed the following fundamental result.

\begin{thm}[{\cite[Theorem~I]{Racinet}}]\label{thm:ds-group}

For every commutative $\Q$-algebra $R$, the following assertions hold:

\begin{enumerate}
\item The set $\DMR_0(R)$ is a prounipotent subgroup of
$
(\exp\bigl(\widehat{\F}_2\bigr)(R),
\circledast).$

\item For each $\lambda\in R$, the natural left action of $\DMR_0(R)$ on $\DMR_{\lambda}(R)$ induced by the Ihara product $\circledast$ is free and transitive. In other words, $\DMR_{\lambda}(R)$ is a $\DMR_0(R)$-torsor.

\item The Lie algebra associated with the group of $\DMR_0(R)$ is $R\cotimes\widehat{\dmr_0}.$

\end{enumerate}

\end{thm}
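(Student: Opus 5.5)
The plan is to reduce the statement to Racinet's earlier result \cite[Proposition~4.A.i]{Racinet} (the group analogue of Theorem~\ref{thm:ds-Lie}) and to the standard dictionary between prounipotent groups and pronilpotent Lie algebras, recalled in the preliminaries via the bijection $\exp^{\cast}$. Assertion (3) is the Lie-theoretic core. Assertion (1) then follows by exponentiation, and assertion (2) by a torsor argument.

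\emph{Step 1: the Lie algebra.} First I show that the tangent space of $\DMR_0(R)$ at the identity $1$ is $R\cotimes\widehat{\dmr_0}$. Write $\Phi=\exp^{\cast}(\varepsilon\psi)$ over $R[\varepsilon]/(\varepsilon^2)$. Then $\Phi=1+\varepsilon\psi$, and the conditions of Definition~\ref{df:ds-group} linearize term by term:
\begin{itemize}
\item group-likeness for $\Delta_{\shuffle}$ becomes primitivity;
\item the coefficient conditions on $x_0,x_1,x_0x_1$ become the linear conditions (2) and (3) of Definition~\ref{df:ds};
\item $\Phi_*=\exp(\varepsilon\psi_{\corr})(1+\varepsilon\pi_Y(\psi))=1+\varepsilon\psi_*$, so $\Delta_*(\Phi_*)=\Phi_*\otimes\Phi_*$ becomes $\Delta_*(\psi_*)=\psi_*\otimes1+1\otimes\psi_*$.
\end{itemize}
Racinet's Proposition~4.A.i shows that $\widehat{\dmr_0}$ is closed under the Ihara bracket $\{\mathchar`-,\mathchar`-\}$. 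The remaining point is to show that $\exp^{\cast}$ maps $R\cotimes\widehat{\dmr_0}$ onto $\DMR_0(R)$. This is not formal, because the harmonic condition is not linear in $\Phi$.

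\emph{Step 2: exponentiation, and the main obstacle.} This step is essentially the content of Racinet's Section~4. The key device is his harmonic-side action. For $\phi\in\widehat{\dmr_0}$, the derivation $s_\phi$ is compatible with a derivation $s^Y_\phi$ of $\Q\langle\langle Y\rangle\rangle$, in the sense that $(s_\phi\Phi)_*$ can be computed from $\Phi_*$ and $\phi_*$, including the correction term $\exp(\sum\ldots y_1^n)$. Moreover, $s^Y_\phi$ is a coderivation for $\Delta_*$ exactly when $\phi_*$ is $\Delta_*$-primitive.

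Given this, $t\mapsto\exp^{\cast}(t\phi)_*$ solves a linear differential equation driven by a $\Delta_*$-coderivation. Its solution is therefore $\Delta_*$-group-like, so $\exp^{\cast}(\phi)\in\DMR_0(R)$. Conversely, if $\Phi\in\DMR_0(R)$, its logarithm for $\cast$ is obtained degree by degree by the same equation, and hence lies in $\widehat{\dmr_0}$. I expect verifying this compatibility, especially the bookkeeping of the $y_1^n$ correction terms, to be the main obstacle. I would quote it from \cite[\S4]{Racinet} rather than redo it. Closure of $\DMR_0(R)$ under $\cast$ and under inverses then follows from Baker--Campbell--Hausdorff for the pronilpotent Lie algebra $(R\cotimes\widehat{\dmr_0},\{\mathchar`-,\mathchar`-\})$, which gives (1) and (3).

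\emph{Step 3: the torsor.} Freeness of the action on $\DMR_\lambda(R)$ is immediate, since $\cast$ is a group law on $\exp(\widehat{\F}_2)(R)$. For transitivity, take $\Phi,\Phi'\in\DMR_\lambda(R)$ and set $g=\Phi'\cast\Phi^{-1}$. We must show $g\in\DMR_0(R)$. The coefficient conditions are handled directly: the $x_0x_1$-coefficient is additive in low weight, so the values $-\lambda^2/24$ cancel. For the harmonic condition, I would use Racinet's statement that the set of $\Phi$ satisfying the double shuffle equations is stable under $\cast$ and under inverses, together with his formula $(g\cast\Phi)_*=\text{(action of }g)\cdot\Phi_*$ on the $Y$-side. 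With these, $g_*$ is group-like.

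This argument presupposes that $\DMR_\lambda(R)$ is nonempty. If it is empty the claim is vacuous, and Racinet in any case proves nonemptiness over $\Q$, for example via the Drinfeld associator.
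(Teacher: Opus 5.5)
The paper proves nothing here; it only cites Racinet's Theorem~I. Your Steps~1--2 are a reasonable outline of his argument: linearize at $1+\varepsilon\psi$, then integrate the tangency of $s_\phi$ to the harmonic condition, delegating the $Y$-side coderivation statement to his \S4.

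\textbf{The gap is in Step~3.} For transitivity you invoke a ``statement of Racinet'' that the set of solutions of the double shuffle equations is stable under $\circledast$ and $\circledast$-inversion. No such statement is available, and it is false as soon as $\lambda^2\neq0$. Only $\DMR_0(R)$ is a group.
\begin{itemize}
\item For $\Phi,\Phi'\in\DMR_\lambda(R)$, the $x_0x_1$-coefficients of $\Phi^{-1}$ and $\Phi'\circledast\Phi$ are $+\lambda^2/24$ and $-\lambda^2/12$, so neither lies in $\DMR_\lambda(R)$.
\item Even $\bigcup_\mu\DMR_\mu$ is not right-stable under $\DMR_0$. If it were, the $x_0x_1$-coefficient together with the torsor property would make each $\Phi\in\DMR_\lambda$ normalize $\DMR_0$. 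That would force $\{[x_0,x_1],\dmr_0\}\subset\dmr_0$, which fails in weight $5$. Indeed $\{[x_0,x_1],[x_0,[x_0,x_1]]+[x_1,[x_1,x_0]]\}$ has no depth-one part and nonzero depth-two part $[[x_0,x_1],[x_0,[x_0,x_1]]]$. But $(\dmr_0)_5$ is spanned by an element with nonzero depth-one part.
\end{itemize}
The $Y$-side formula for $(g\circledast\Phi)_*$ does not rescue the argument. It transports $\Delta_*$-group-likeness only along elements already known to lie in $\DMR_0(R)$, because that is where the coderivation property is used. Using it to show that $g=\Phi'\circledast\Phi^{-1}$ has group-like $g_*$ is therefore circular.

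There is also a smaller omission: you never check that the action is well defined, i.e.\ that $\DMR_0(R)\circledast\DMR_\lambda(R)\subset\DMR_\lambda(R)$. Your flow argument is only run from the base point $1$.

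\textbf{The missing idea is successive approximation,} which you already gesture at in the converse part of Step~2.
\begin{itemize}
\item First run your flow from an arbitrary base point $\Phi\in\DMR_\lambda(R)$. This causes no trouble, because $\langle s_\phi\Phi\mid x_0^{n-1}x_1\rangle=\langle\phi\mid x_0^{n-1}x_1\rangle$ for $\phi\in R\cotimes\widehat{\dmr}_0$. So the correction factor is simply multiplied by $\exp(t\phi_{\corr})$ whatever $\Phi$ is. This gives $\exp^{\circledast}(R\cotimes\widehat{\dmr}_0)\circledast\DMR_\lambda(R)\subset\DMR_\lambda(R)$.
\item Next take $\Phi,\Phi'\in\DMR_\lambda(R)$; they agree in weight $\le2$. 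Suppose they agree below weight $n$, and let $\delta$ be the weight-$n$ part of $\Phi'-\Phi$. Then $\delta$ is shuffle-primitive. Also $\delta_*$ is the weight-$n$ part of $\Phi'_*-\Phi_*$, hence $\Delta_*$-primitive. So $\delta\in R\otimes(\dmr_0)_n$.
\item The element $\exp^{\circledast}(\delta)\circledast\Phi$ lies in $\DMR_\lambda(R)$ and agrees with $\Phi'$ through weight $n$.
\item The convergent $\circledast$-product of these factors $\exp^{\circledast}(\delta)$ is the required $g\in\DMR_0(R)$ with $g\circledast\Phi=\Phi'$.
\end{itemize}
Taking $\lambda=0$ and $\Phi=1$, the same argument also makes your converse $\DMR_0(R)\subset\exp^{\circledast}(R\cotimes\widehat{\dmr}_0)$ precise.
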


\section{Adjoint double shuffle space and adjoint double shuffle set}

\subsection*{Preliminary}

We first prepare some tools. Let $T$ be an indeterminate,
\[
\Q[T;Y\rangle := \Q[T]\langle Y \rangle,
\]
and
\[
\pi_{Y,\#}\colon \Q\langle  X \rangle \longrightarrow\Q[T;Y\rangle
\]
the $\Q$-linear map defined by
\[
\pi_{Y,\#}\bigl(x_0^{k_1-1}x_1 x_0^{k_2-1}x_1 \cdots x_0^{k_r-1}x_1 x_0^{l}\bigr)
:=
\begin{cases}
T^{k_1-1} y_{k_2}\cdots y_{k_r} & \text{if } l=0,\\
0 & \text{otherwise.}
\end{cases}
\]
Recall that $\Q\langle Y\rangle$ is endowed with the weight filtration, and $\Q\langle Y\rangle_{\ge m}\subset \Q\langle Y\rangle$ is the $\Q$-subspace spanned by words of weight $\ge m$.
We endow $\Q[T;Y\rangle$ with the grading determined by $\wt T=1$ and $\wt y_m=m$ for $m\in \mathbb{Z}_{\ge 1}$.
For $d\ge0$, let $\Q[T;Y\rangle_{d}$ denote the homogeneous component of total weight $d$.
For $m\ge1$, set
\[
A_m :=\Q[T;Y\rangle \Big/ \bigoplus_{d\ge m} \Q[T;Y\rangle_{d} .
\]
Then the natural truncation maps $A_{m'}\rightarrow A_m$ $(m\le m')$ form a projective system, and we define
\begin{equation*}\label{eq:lim-com}
\Q[[T;Y \rangle\rangle
:= \varprojlim_{m\ge1} A_m .
\end{equation*}
Note that  the canonical inclusion $\Q[T] \hookrightarrow \Q[T; Y \rangle$ induces a canonical action $\Q[T]/(T)^m \rightarrow  A_m$ for each $m \in \mathbb{Z}_{\ge 1}$.
Passing to the projective limit, $\Q[[T; Y \rangle\rangle$ is a $\Q[[T]]$-algebra.

In the same manner, we define $R$-algebra $R[[T;Y \rangle\rangle$ as
\[
R[[T;Y \rangle\rangle
\cong \varprojlim_{m} R\otimes A_m
\]
and we can see $R[[T;Y \rangle\rangle$ as $R[[T]]$-algebra.

By definition, for each $n\ge 1$ we have
\[
\pi_{Y,\#}(X^{n})
\subset
\Q[T;Y\rangle_{n-1}
\subset\ \Q[T; Y\rangle .
\]
Thus, $\pi_{Y,\#}$ is a continuous $\Q$-linear map on $\Q\langle X \rangle$ endowed with the weight topology,
and hence it extends uniquely to a continuous $\Q$-linear map $\Q\langle \langle X \rangle\rangle \longrightarrow \Q[[T;Y \rangle\rangle .$ 
We use the same symbol for the continuous extension and the scalar extensions.

\subsection*{Adjoint aspects of $\ide{F}_2$ and $\exp(\widehat{\F_2})$}

Put
\[
\Fad
:=
\{\Psi \in \ide{F}_2
\mid
{}^{\exists}\psi\in\ide{F}_2
\text{ such that }
\Psi=[x_1,\psi]\}.
\]
We define the $\Q$-bilinear operator $\{\mathchar`-,\mathchar`-\}_1$ on $\Q\langle X\rangle$ by
\[
\{\Psi_1,\Psi_2\}_1
:=
d_{\Psi_1}(\Psi_2)-d_{\Psi_2}(\Psi_1)
\]
for $\Psi_1,\Psi_2\in\Q\langle X\rangle$. Then, the following holds.

\begin{prop}[{cf. \cite[Proposition~5.1]{Anzawa-4}}]

The pair
\[
(\Fad,\{\mathchar`-,\mathchar`-\}_1)
\]
forms a Lie algebra.

\end{prop}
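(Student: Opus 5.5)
The plan is to transport the Lie structure from $\F_2$ to $\Fad$ through the map $\psi\mapsto[x_1,\psi]$. Since $\F_2$ is free on $x_0,x_1$, the centralizer of $x_1$ in $\F_2$ is $\Q x_1$. So the surjection $\F_2\to\Fad$, $\psi\mapsto[x_1,\psi]$, has kernel $\Q x_1$, and $\Fad\cong \F_2/\Q x_1$ as vector spaces. I will use this identification to reduce the closure and Jacobi properties of $\{\mathchar`-,\mathchar`-\}_1$ on $\Fad$ to known properties of derivations of $\Q\langle X\rangle$.

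\textbf{Step 1 (closure).} For $\Psi_1=[x_1,\psi_1]$ and $\Psi_2=[x_1,\psi_2]$, I compute using $d_{\Psi}(x_1)=\Psi$ and the Leibniz rule:
\[
d_{\Psi_1}(\Psi_2)=[\Psi_1,\psi_2]+[x_1,d_{\Psi_1}(\psi_2)].
\]
Antisymmetrizing gives
\[
\{\Psi_1,\Psi_2\}_1=[x_1,\;d_{\Psi_1}(\psi_2)-d_{\Psi_2}(\psi_1)]+[\Psi_1,\psi_2]-[\Psi_2,\psi_1].
\]
By the Jacobi identity,
\[
[[x_1,\psi_1],\psi_2]-[[x_1,\psi_2],\psi_1]=[x_1,[\psi_1,\psi_2]].
\]
Hence
\[
\{\Psi_1,\Psi_2\}_1=[x_1,\;d_{\Psi_1}(\psi_2)-d_{\Psi_2}(\psi_1)+[\psi_1,\psi_2]].
\]
Since $d_\Psi$ preserves $\F_2$ for $\Psi\in\F_2$ (it is a derivation sending the generators into $\F_2$), the inner element lies in $\F_2$. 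This shows $\{\Fad,\Fad\}_1\subset\Fad$. The inner expression is exactly $s_{\psi_1}(\psi_2)-s_{\psi_2}(\psi_1)$ up to the ordering of the product term, which suggests that $[x_1,-]$ intertwines the Ihara bracket with $\{\mathchar`-,\mathchar`-\}_1$. I can use that as an alternative route, provided $\Q x_1$ is handled correctly.

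\textbf{Step 2 (Lie algebra axioms).} Antisymmetry is immediate from the definition. For Jacobi, the cleanest argument avoids the Ihara bracket altogether. Consider the map $\Psi\mapsto d_\Psi$ from $\Fad$ to $\mathrm{Der}(\Q\langle X\rangle)$. Both $[d_{\Psi_1},d_{\Psi_2}]$ and $d_{\{\Psi_1,\Psi_2\}_1}$ kill $x_0$. They agree on $x_1$, since
\[
[d_{\Psi_1},d_{\Psi_2}](x_1)=d_{\Psi_1}(\Psi_2)-d_{\Psi_2}(\Psi_1)=\{\Psi_1,\Psi_2\}_1.
\]
A derivation is determined by its values on generators, so
\[
d_{\{\Psi_1,\Psi_2\}_1}=[d_{\Psi_1},d_{\Psi_2}].
\]
The map $\Psi\mapsto d_\Psi$ is injective because $\Psi=d_\Psi(x_1)$. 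Therefore the Jacobi identity for the commutator of derivations transfers to $\{\mathchar`-,\mathchar`-\}_1$, once closure from Step 1 is known. This argument actually works on all of $\Q\langle X\rangle$, so the only genuine content is closure.

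\textbf{Main obstacle.} The only delicate point is Step 1, specifically the sign and ordering bookkeeping in the Leibniz expansion of $d_{\Psi_1}([x_1,\psi_2])$ and in the Jacobi rearrangement. I will check it first on low-weight examples, for instance $\psi_1=x_0$ and $\psi_2=[x_0,x_1]$. The remaining steps are formal.
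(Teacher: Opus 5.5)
Your proof is correct; the sign bookkeeping in Step~1 is right as written. You differ from the paper in how you obtain the Jacobi identity.

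The paper proves nothing here beyond citing \cite[Proposition~5.1]{Anzawa-4}. The facts it records right afterwards are that $\ad(x_1)\colon\F_2\to\Fad$ is surjective with kernel $\Q x_1$, and that $\ad(x_1)(\{\psi_1,\psi_2\})=\{\ad(x_1)(\psi_1),\ad(x_1)(\psi_2)\}_1$. Together these give the transport route: $(\Fad,\{-,-\}_1)$ is the image of Racinet's Lie algebra $(\F_2,\{-,-\})$ under a surjective bracket-preserving map, so closure and Jacobi descend from the Ihara bracket.

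Your Step~1 is exactly that intertwining identity. With the paper's convention $s_{\psi_1}(\psi)=d_{[x_1,\psi_1]}(\psi)+\psi_1\psi$, your inner element is exactly $\{\psi_1,\psi_2\}$, so the hedge about ordering is unnecessary. On that route $\Q x_1$ also needs no special handling, since Jacobi simply pushes forward along a surjection.

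Your Step~2 instead identifies $(\Q\langle X\rangle,\{-,-\}_1)$, via $\Psi\mapsto d_\Psi$, with the Lie algebra of derivations killing $x_0$. This buys several things:
\begin{itemize}
\item It is more elementary and more general, giving a Lie bracket on all of $\Q\langle X\rangle$.
\item It is the infinitesimal form of $\kappa_{\Phi_1}\circ\kappa_{\Phi_2}=\kappa_{\Phi_1\cast_1\Phi_2}$.
\item It does not presuppose the Jacobi identity for the Ihara bracket. Combined with Step~1 and $\ker\ad(x_1)=\Q x_1$, it even implies that identity: the Ihara Jacobiator must be a multiple of $x_1$ with no terms of weight below $3$, hence zero.
\end{itemize}
The transport route, in exchange, is shorter once Racinet's theorem is granted.

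Two minor points. The centralizer fact in your opening paragraph is never used in your argument. Your planned test case $\psi_1=x_0$ is uninformative: $d_{[x_1,x_0]}=-\ad(x_0)$ commutes with every $d_\Psi$ (because $d_\Psi(x_0)=0$), so $[x_1,x_0]$ is central for $\{-,-\}_1$ and both sides of your identity vanish.
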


We endow $\Fad$ with the topology induced by the weight topology and denote its completion by $\cFad$.
The map
\[
\ad(x_1):
\ide{F}_2
\longrightarrow
\Fad;
\qquad
\psi\longmapsto[x_1,\psi]
\]
is a continuous surjective map with kernel $\Q x_1$.
Moreover, it satisfies
\[
\ad(x_1)(\{\psi_1,\psi_2\})
=
\{\ad(x_1)(\psi_1),\ad(x_1)(\psi_2)\}_1
\]
for $\psi_1,\psi_2\in\ide{F}_2$.

The group aspect of $\cFad$ is described as follows. Put
\[
\FAd
:=
\{
\phi^{-1}x_1\phi
\mid
\phi\in\exp(\widehat{\F}_2)
\}.
\]
We define a binary operation $\cast_1$ on $\FAd$ by
\[
\Phi_1\cast_1\Phi_2
:=
\kappa_{\Phi_1}(\Phi_2)
\qquad
(\Phi_1,\Phi_2\in\FAd).
\]
Then, the following holds.

\begin{prop}[{cf. \cite[Remark~4.13]{Anzawa-4}}]

The pair
\[
(\FAd,\cast_1)
\]
forms a group with unit $x_1$.

\end{prop}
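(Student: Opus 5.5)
The idea is to transport the group law on $\exp(\widehat{\F}_2)$ along the map
\[
\Ad(x_1):\exp(\widehat{\F}_2)\longrightarrow\FAd,\qquad \phi\longmapsto \phi^{-1}x_1\phi ,
\]
which is surjective by the definition of $\FAd$. The plan is to show that this map intertwines the Ihara product $\cast$ with $\cast_1$. Once that identity holds, associativity, unit and inverses for $\cast_1$ follow formally from the corresponding properties of $(\exp(\widehat{\F}_2),\cast)$. The only extra check is that the relation induced by $\Ad(x_1)$ is compatible, i.e.\ that $\cast_1$ is well defined on the image. This is automatic here, because $\Phi_1\cast_1\Phi_2=\kappa_{\Phi_1}(\Phi_2)$ is defined directly in terms of $\Phi_1,\Phi_2$ rather than in terms of chosen preimages.

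\textbf{Step 1: the homomorphism identity.} I would prove
\[
\Ad(x_1)(\phi_1\cast\phi_2)=\kappa_{\phi_1^{-1}x_1\phi_1}\bigl(\phi_2^{-1}x_1\phi_2\bigr).
\]
Put $\Phi_1=\phi_1^{-1}x_1\phi_1$. Since $\kappa_{\Phi_1}$ is a continuous algebra endomorphism fixing $x_0$ and sending $x_1$ to $\Phi_1$, we get
\[
\kappa_{\Phi_1}(\phi_2^{-1}x_1\phi_2)=\kappa_{\Phi_1}(\phi_2)^{-1}\,\Phi_1\,\kappa_{\Phi_1}(\phi_2).
\]
On the other hand, $\phi_1\cast\phi_2=\phi_1\kappa_{\Phi_1}(\phi_2)$, reading the typo in the definition as $\kappa_{\phi_1^{-1}x_1\phi_1}$. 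Hence
\[
(\phi_1\cast\phi_2)^{-1}x_1(\phi_1\cast\phi_2)=\kappa_{\Phi_1}(\phi_2)^{-1}\,\phi_1^{-1}x_1\phi_1\,\kappa_{\Phi_1}(\phi_2),
\]
which is the same expression. In particular $\Phi_1\cast_1\Phi_2\in\FAd$, so $\FAd$ is closed under $\cast_1$.

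\textbf{Step 2: associativity.} For $\Phi_1,\Phi_2,\Phi_3\in\FAd$, I would use the composition rule
\[
\kappa_{\Phi_1}\circ\kappa_{\Phi_2}=\kappa_{\kappa_{\Phi_1}(\Phi_2)}.
\]
This holds because both sides are continuous algebra endomorphisms that agree on $x_0$ and $x_1$. Applying both sides to $\Phi_3$ gives $\Phi_1\cast_1(\Phi_2\cast_1\Phi_3)=(\Phi_1\cast_1\Phi_2)\cast_1\Phi_3$. Continuity of $\kappa_\Phi$ requires $\Phi$ to have no constant term, which holds since every element of $\FAd$ is $x_1$ plus higher-weight terms.

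\textbf{Step 3: unit and inverses.} For the unit, $\kappa_{x_1}=\id$ gives $x_1\cast_1\Phi=\Phi$. Also $\Phi\cast_1 x_1=\kappa_\Phi(x_1)=\Phi$. For inverses, given $\Phi=\Ad(x_1)(\phi)$, let $\phi'$ be the $\cast$-inverse of $\phi$ in the group $(\exp(\widehat{\F}_2),\cast)$. Step 1 then shows that $\Ad(x_1)(\phi')$ is a two-sided $\cast_1$-inverse of $\Phi$, since $\Ad(x_1)(1)=x_1$.

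The only real obstacle is the bookkeeping in Step 1, namely that $\kappa_{\Phi_1}$ commutes with inversion and conjugation. This is immediate because $\kappa_{\Phi_1}$ is an algebra endomorphism, and it preserves the condition $\langle\cdot\mid1\rangle=1$, so inverses in $\Q\langle\langle X\rangle\rangle$ are mapped to inverses.
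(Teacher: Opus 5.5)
Your proof is correct. The paper gives no argument of its own beyond citing \cite[Remark~4.13]{Anzawa-4}, but it immediately records that $\Ad(x_1)\colon(\exp(\widehat{\F}_2),\cast)\to(\FAd,\cast_1)$ is a surjective group homomorphism. That is exactly your Step~1 identity, under your reading $\phi_1\cast\phi_2=\phi_1\kappa_{\phi_1^{-1}x_1\phi_1}(\phi_2)$, which is consistent with $s_\psi$, and transporting closure, unit and inverses along this surjection is essentially the intended argument; your direct associativity check via $\kappa_{\Phi_1}\circ\kappa_{\Phi_2}=\kappa_{\kappa_{\Phi_1}(\Phi_2)}$ is fine, though it also follows from Step~1 and surjectivity.
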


Moreover, the map
\[
\Ad(x_1):
\exp(\ide{F}_2)
\longrightarrow
\FAd;
\qquad
\phi\longmapsto\phi^{-1}x_1\phi
\]
is a continuous surjective group homomorphism, where $\exp(\ide{F}_2)$ is equipped with the Ihara product and $\FAd$ with $\cast_1$.
Its kernel is $\exp(\Q x_1)$.
The Lie algebra corresponding to $(\FAd,\cast_1)$ is
\[
(\cFad,\{\mathchar`-,\mathchar`-\}_1).
\]
One should note that an exponential map between $\cFad$ and $\FAd$ is provided by
\begin{align}
\exp^{\cast_1} : \cFad \rightarrow \FAd(\Q) \quad ; \quad \Psi \mapsto \sum_{n\ge 0} \frac{d_{\Psi}^n}{n!}(x_1). \label{eq:exp-ad}
\end{align}
Note that there is a commutative diagram:
\begin{align}
\xymatrix{
\exp(\widehat{\F}_2) \ar[r]^{\Ad(x_1)} &\FAd \\
\widehat{\F}_2 \ar[u]^{\exp^{\cast}} \ar[r]_{\ad(x_1)} & \cFad \ar[u]_{\exp^{\cast_1}}
} \label{eq:dia-exp}
\end{align}

To finish this subsection,  
put
\[
\FAd(R)
:=
\left\{
\phi^{-1}x_1\phi
\mid
\phi\in\exp\left(R\widehat{\otimes}\widehat{\F}_2\right)
\right\}.
\]
Note that \eqref{eq:dia-exp} is functorial with respect to $\Q$-algebras.

\subsection*{The adjoint double shuffle linear space}

We next introduce the linearized version of the adjoint double shuffle relations.

For $\Psi\in\Q\langle\langle X\rangle\rangle$, put
\[
\Psi_{\#}:=\pi_{Y,\#}(\Psi)\in\Q[[T;Y\rangle\rangle.
\]
Here, we extend $\Delta_*$ continuously to $\Q[[T;Y\rangle\rangle$ as a $\Q[[T]]$-algebra homomorphism.

\begin{df}[{cf. \cite[Definition~3.2.6]{Jarossay}}]\label{df:addmr}
We define the \emph{adjoint double shuffle linear space} $\addmr_0$ to be the $\Q$-linear subspace of $\Fad$ consisting of elements $\Psi$ satisfying the following conditions:
\begin{enumerate}
\item
\[
\langle\Psi\mid w\rangle=0
\]
for every $w\in X^*$ with $\wt w<4$.
\item
\[
\Delta_*(\Psi_{\#})
=
\Psi_{\#}\otimes 1+1\otimes\Psi_{\#}.
\]
\end{enumerate}
\end{df}

We endow $\addmr_0$ with the topology induced by the weight topology on $\Fad$, and denote its completion by $\widehat{\addmr_0}$. Then $\widehat{\addmr_0}$ is naturally regarded as a $\Q$-linear subspace of $\cFad$.
Equivalently, $\widehat{\addmr}_0$ is the tangent space at $x_1$ of the adjoint double shuffle set defined in the next subsection.

\subsection*{The adjoint double shuffle set}

Throughout this subsection, let $R$ be a commutative $\Q$-algebra.
We extend $\pi_{Y,\#}$ and $\Delta_*$ to $R$ by scalar extension.
For $\Phi\in R\langle\langle X\rangle\rangle$, put
\[
\Phi_{\#}:=\pi_{Y,\#}(\Phi)\in R[[T;Y\rangle\rangle.
\]
\begin{df}[{\cite[Definition~3.2.6]{Jarossay}}]\label{df:AdDMR}

We define the \emph{adjoint double shuffle set} $\AdDMR_0(R)$ to be the subset of $\FAd(R)$ consisting of elements $\Phi$ satisfying the following conditions:

\begin{enumerate}

\item
\[
\langle\Phi-x_1\mid w\rangle=0
\]
for every $w\in X^*$ with $\wt w<4$.

\item
\[
\Delta_*(\Phi_{\#})
=
\Phi_{\#}\otimes\Phi_{\#}.
\]

\end{enumerate}

\end{df}
Moreover, for every commutative $\Q$-algebra $R$, the map
\[
\Ad_R(x_1):
\DMR_0(R)
\longrightarrow
\AdDMR_0(R);
\qquad
\phi\longmapsto\phi^{-1}x_1\phi
\]
is well-defined and injective (cf. \cite[Proposition~3.2.5]{Jarossay}).

In particular, by taking $R=\Q[\varepsilon]/(\varepsilon^2)$, one obtains
\[
\widehat{\addmr}_0
=
\left\{
\Psi\in\cFad
\mid
x_1+\varepsilon\Psi
\in
\AdDMR_0\bigl(\Q[\varepsilon]/(\varepsilon^2)\bigr)
\right\}.
\]

\begin{rmk}

The notation in this paper differs slightly from that in \cite{Anzawa-4}.
The space denoted by $\addmr_0$ in the present paper is the space denoted by
$\addmr_0\cap\Fad$ in \cite{Anzawa-4}.
Similarly, the set denoted by $\AdDMR_0(R)$ in the present paper is the set
$\AdDMR_0(R)\cap\FAd(R)$ in the notation of \cite{Anzawa-4}.
In other words, throughout this paper, the adjoint condition is incorporated into the definitions of $\addmr_0$ and $\AdDMR_0(R)$.

\end{rmk}

\section{The refined versions of the Jarossay's questions, results, and main  theorems}

\subsection*{The refined Jarossay's questions}

In \cite{Anzawa-4}, refined versions of Jarossay's question (\cite[Question~3.2.8]{Jarossay}) were formulated on both the group side and the corresponding Lie algebra side. In the notation of the present paper, we consider the following questions.

\begin{qu}[{cf. \cite[Questions~4.10 and~4.12]{Anzawa-4}}]\label{qu:Jarossay}
\begin{enumerate}
\item Does $\AdDMR_0(R)$ form a subgroup of $(\FAd(R),\cast_1)$, and is the adjoint map
\[
\Ad_R(x_1):
\DMR_0(R)
\longrightarrow
\AdDMR_0(R)
\]
a group isomorphism for every commutative $\Q$-algebra $R$?
\item Does $\widehat{\addmr_0}$ form a Lie subalgebra of
$(\cFad,\{\mathchar`-,\mathchar`-\}_1)$, and is the adjoint map
\[
\ad(x_1):
\widehat{\dmr_0}
\longrightarrow
\widehat{\addmr_0}
\]
a Lie algebra isomorphism?
\item Does $\addmr_0$ form a Lie subalgebra of
$(\Fad,\{\mathchar`-,\mathchar`-\}_1)$, and is the adjoint map
\[
\ad(x_1):
\dmr_0
\longrightarrow
\addmr_0
\]
a Lie algebra isomorphism?
\end{enumerate}
\end{qu}
One motivation for these questions comes from the comparison of the weight-homogeneous components of $\dmr_0$ and $\addmr_0$. As observed in \cite[Table~1]{Anzawa-4}, their corresponding homogeneous components have the same dimensions up to weight $10$, with the weight shifted by one under $\ad(x_1)$.

\subsection*{A result related to Question~\ref{qu:Jarossay}}

To describe the result related to Question~\ref{qu:Jarossay}, we introduce another $\Q$-linear subspace of $\Fad$.
For $\Psi\in\Q\langle X\rangle$ with no terms of weight less than $2$, we write uniquely
\[
\Psi
=
\sum_{i,j\in\{0,1\}}
x_i\Psi^{ij}x_j,
\qquad
\Psi^{ij}\in\Q\langle X\rangle.
\]
We define
\begin{align}
\Fadred
:=
\left\{
\Psi = [x_1,\psi]\in\Fad
\ \middle|\
\begin{array}{l}
\psi \text{ has no linear term},\\
\Psi^{11}+\Psi^{10}+\Psi^{01}=0
\end{array}
\right\}. 
\end{align}
Then $\Fadred$ is a Lie subalgebra of
$(\Fad,\{\mathchar`-,\mathchar`-\}_1)$.
We endow $\Fadred$ with the topology induced by the weight topology on $\Fad$, and denote its completion by $\cFadred$. Thus,
\[
\cFadred\subset\cFad
\]
is a closed Lie subalgebra.

\begin{rmk}
In \cite{Hirose-parity}, Hirose derived the following explicit formula for the parity relations of multiple zeta values using an identity for multitangent functions due to Bouillot \cite{Bouillot}:
\begin{align}
\begin{aligned}
&\sum_{j = 0}^r (-1)^{k_{j+1}+\cdots + k_r} \zeta^*(k_{j},\ldots,k_1)\zeta^*(k_{j+1},\ldots,k_r) \\
=&\delta^{k_1,\ldots,k_r} + \sum_{j = 1}^r \sum_{\substack{a+2m+b = k_j \\ a,b,m\ge 0}} (-1)^{b+k_{j+1}+\cdots +k_r + m + 1}\\
&\times \frac{(2\pi )^{2m}}{(2m)!}B_{2m}\zeta_a^*(k_{j-1},\ldots,k_1)\zeta_b^*(k_{j+1},\ldots,k_r).
\end{aligned}
\label{eq:parity}
\end{align}
Here, $(k_1,\ldots,k_r)\in\mathbb{Z}_{>0}^r$,
$\zeta^*(k_1,\ldots,k_r)$ denotes the harmonically regularized multiple zeta value (see \cite[Proposition~1]{Ihara-Kaneko-Zagier}),
and $\delta^{k_1,\ldots,k_r}$ is an element of $\Q[\pi^2]$.
We follow the notation of \cite{Hirose-parity} for $\zeta_a^*$.

In \cite{Kawamura-Maesaka}, Kawamura and Maesaka refer to \eqref{eq:parity} as the \emph{reduction relation} and show that the affine scheme defined by the corresponding formal relations can be identified with $\mathcal{G}^{\mathrm{inert}}$ introduced by Enriquez and Furusho \cite{Enriquez-Furusho}.
Enriquez and Furusho showed that $\mathcal{G}^{\mathrm{inert}}$ is isomorphic to a certain subgroup of the group of tangential automorphisms of $\exp(\ide{F}_2)$ that preserve the conjugacy classes of $\exp(x_0)$, $\exp(x_1)$, and $\exp(x_\infty)$, where $x_\infty=-x_0-x_1$.
This is closely related to the speciality condition in the Kashiwara--Vergne theory developed in \cite{Alekseev-Torossian}.

In \cite{Anzawa-4}, we studied the algebraic relations
\begin{align}
\Psi^{11}+\Psi^{10}+\Psi^{01}=0 \label{eq:Fad}
\end{align}
in connection with \eqref{eq:parity} and referred to them as the strong parity relations.
In view of \cite{Kawamura-Maesaka}, we now refer to \eqref{eq:Fad} as the \emph{adjoint reduction relations}.
\end{rmk}

We can now restate the main theorem of \cite{Anzawa-4} in the notation of the present paper.
\begin{thm}[{\cite[Main Theorem~3]{Anzawa-4}}]\label{thm:Anzawa-main}
The $\Q$-linear space $\widehat{\addmr_0}\cap\cFadred$
is a Lie subalgebra of $(\cFad,\{\mathchar`-,\mathchar`-\}_1).$
\end{thm}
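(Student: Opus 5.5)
The plan is to identify the space in the statement with the image of Racinet's Lie algebra and then transport the bracket. More precisely, I aim to prove
\[
\widehat{\addmr_0}\cap\cFadred=\ad(x_1)\bigl(\widehat{\dmr_0}\bigr).
\]
Granting this, closure under the bracket follows at once. By Theorem~\ref{thm:ds-Lie}, $\widehat{\dmr_0}$ is closed under the Ihara bracket. Moreover, $\ad(x_1)$ satisfies $\ad(x_1)(\{\psi_1,\psi_2\})=\{\ad(x_1)\psi_1,\ad(x_1)\psi_2\}_1$. Hence the image of $\widehat{\dmr_0}$ is a Lie subalgebra of $(\cFad,\{\mathchar`-,\mathchar`-\}_1)$.

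For the inclusion $\supset$, take $\psi\in\widehat{\dmr_0}$ and put $\Psi=[x_1,\psi]$.
\begin{enumerate}
\item The weight condition of Definition~\ref{df:addmr} follows from conditions (1)--(3) of Definition~\ref{df:ds}, after checking that $\dmr_0$ has no weight-$2$ part beyond $x_0x_1$-type terms and no weight-$3$ obstruction.
\item The condition $\Delta_*(\Psi_\#)=\Psi_\#\otimes1+1\otimes\Psi_\#$ is the linearization at $\varepsilon$ of Jarossay's statement that $\Ad_R(x_1)$ maps $\DMR_0(R)$ into $\AdDMR_0(R)$, taken for $R=\Q[\varepsilon]/(\varepsilon^2)$.
\item The adjoint reduction relation $\Psi^{11}+\Psi^{10}+\Psi^{01}=0$ should be the linearized form of the parity/reduction relations satisfied by $\dmr_0$. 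I would derive it from the known fact that double shuffle elements satisfy the depth-parity and reduction identities, following Kawamura--Maesaka and Hirose.
\end{enumerate}

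For the inclusion $\subset$, take $\Psi=[x_1,\psi]$ in the intersection, with $\psi$ chosen without linear term. This choice is possible and unique because $\ker\ad(x_1)=\Q x_1$. I then compute $\Psi_\#$ explicitly in terms of $\psi$. Words $x_1w$ give $\pi_{Y,\#}(x_1w)=\pi_Y(w)$. Words $w x_1$ with $w=x_0^{k-1}x_1u$ give $T^{k-1}\pi_Y(ux_1)$. So, modulo terms controlled by how $\psi$ ends, $\Psi_\#$ is $\pi_Y$ of a shifted $\psi$ minus a $T$-twisted sum. The reduction relation expresses the parts $\Psi^{10},\Psi^{01}$ through $\Psi^{11}$, and I use it to eliminate the contributions of words of $\psi$ ending in $x_0$. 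Since $\Delta_*$ is $\Q[[T]]$-linear, primitivity of $\Psi_\#$ is equivalent to primitivity of each $T^k$-coefficient. The goal is to show that the $T^0$-coefficient (or a suitable combination of coefficients) is, up to a primitive polynomial in $y_1$, exactly $\psi_*=\pi_Y(\psi)+\psi_{\corr}$. The weight condition then yields conditions (1)--(3) of Definition~\ref{df:ds}.

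The main obstacle is this last step: recovering the correction term $\psi_{\corr}$, i.e.\ the coefficients $\langle\psi\mid x_0^{n-1}x_1\rangle y_1^n$. These are exactly what $\pi_Y$ loses, and they must come out of the higher $T$-coefficients of $\Psi_\#$ together with the reduction relation. My first attempt is to introduce an operator $D$ on $\Q[[T;Y\rangle\rangle$ recording the passage $T^{k-1}\mapsto y_k$-insertion, together with the involution $\rho$ induced by the reduction relation. I would then show that $D$ and $\rho$ commute and that both preserve primitivity for $\Delta_*$. Finally, I would treat separately the $y_1$-powers, where primitivity in $\Q\langle Y\rangle$ forces the shape $\sum c_n\frac{(-1)^{n-1}}{n}y_1^n$; this produces the exact normalization of $\psi_{\corr}$.
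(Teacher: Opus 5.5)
Your strategy is the right one. Identifying $\widehat{\addmr_0}\cap\cFadred$ with $\ad(x_1)(\widehat{\dmr_0})$ and transporting Racinet's bracket is how the paper recovers this statement: it is only cited from \cite{Anzawa-4} here, but it follows from Proposition~\ref{prop:ism}. The gap is that the inclusion $\subset$, which is the real content, is not proved, and the target you set for it is false. You aim to show that the $T^0$-coefficient of $\Psi_\#$, or some combination of its $T$-coefficients, equals $\psi_*$ modulo a primitive polynomial in $y_1$, i.e.\ modulo $\Q y_1$. This fails already in weight $3$. For $\psi=[x_0,[x_0,x_1]]-[x_1,[x_0,x_1]]\in\dmr_0$ one computes
\[
\Psi_\#=3[y_2,y_1]+T(2y_2-y_1^2)-T^2y_1,\qquad \psi_*=y_3-2y_1y_2+y_2y_1+\tfrac13y_1^3,
\]
so the term $y_3$ of $\psi_*$ occurs in no coefficient of $\Psi_\#$. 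In general, write $\pi_Y(\psi x_1)=\sum_n y_{n+1}\Psi^{(n)}=\sum_n\Psi_{(n)}y_{n+1}$. The $T^0$-coefficient is then $\pi_Y(\psi)-\Psi^{(0)}$, and what must be shown is $\widetilde{\Delta}_*(\Psi^{(0)})=-\widetilde{\Delta}_*(\psi_{\corr})$; no matching of coefficients yields this. Your $\rho$ is also left undefined. The operator that actually matters, cyclic rotation of $Y$-words, does not preserve $\Delta_*$-primitivity: $[y_1,[y_1,y_2]]$ is primitive, but its rotation differs from it by $-3y_1[y_1,y_2]$, which is not. So ``$D$ and $\rho$ both preserve primitivity'' cannot be the mechanism.

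What is needed is the paper's commutator argument, which has four steps.
\begin{enumerate}
\item Let $D$ be the derivation with $D(y_1)=0$ and $D(y_{n+1})=ny_n$; it is the image of $\partial_{x_0}$ under $\pi_Y$. It kills $\pi_Y(\psi x_1)$ and commutes with $\rho$. This gives $(n+1)R_{n+1}=-D(R_n)$ for $R_n=\Psi^{(n)}-\Psi_{(n)}$.
\item $R_0$ is minus the $T^0$-coefficient, hence primitive, and $D$ is a coderivation. So every $R_n$ is primitive.
\item Apply $\widetilde{\Delta}_*$ to $\sum_n[y_{n+1},\Psi^{(n)}]=-\sum_nR_ny_{n+1}$. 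Use the reduction relation in the form $\Psi^{(k)}+\sum_{i\ge1}y_i\Psi^{(k+i)}=\Psi_{(k)}+\sum_{i\ge1}\Psi_{(k+i)}y_i$, obtained for $k=0$ directly and for all $k$ by applying $D$. This gives $[\Delta_*(y_1),\widetilde{\Delta}_*(\pi_Y(\psi))]=0$, hence $\widetilde{\Delta}_*(\pi_Y(\psi))\in\Q\langle y_1\rangle^{\otimes2}$.
\item The normalization of $\psi_{\corr}$ is not a shape forced by primitivity. It comes from the explicit telescoping $\langle\Delta_*(\pi_Y(\psi))\mid y_1^l\otimes y_1\rangle=(-1)^{l+1}\langle\pi_Y(\psi)\mid y_{l+1}\rangle$, via harmonic-product identities and coassociativity.
\end{enumerate}

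Your step~3 of $\supset$ is also unsupported. The paper stresses that $\ad(x_1)(\dmr_0)\subset\Fadred$ had not been proved. Hirose's formula is an analytic identity for genuine multiple zeta values, and Kawamura--Maesaka only identify the scheme cut out by reduction relations with $\mathcal{G}^{\mathrm{inert}}$, so neither applies to formal double shuffle solutions. The paper proves this inclusion by running the same commutator identity backwards, together with $S_X(\psi)=-\psi$ and the vanishing of $\langle\psi\mid x_0^nx_1\rangle$ for odd $n$.

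For this particular statement you can drop step~3 altogether. The paper states that $\cFadred$ is itself closed under $\{\cdot,\cdot\}_1$. Then the inclusion $\subset$, together with Jarossay's $\ad(x_1)(\widehat{\dmr_0})\subset\widehat{\addmr_0}$ and Theorem~\ref{thm:ds-Lie}, already gives closure of the intersection.
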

By taking the direct sum of the weight-homogeneous components, we immediately obtain the non completed part.
\begin{cor}\label{cor:Anzawa-main-finite}
The $\Q$-linear space $\addmr_0\cap\Fadred $ is a Lie subalgebra of $(\Fad,\{\mathchar`-,\mathchar`-\}_1).$
\end{cor}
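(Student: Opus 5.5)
The plan is to deduce Corollary~\ref{cor:Anzawa-main-finite} from Theorem~\ref{thm:Anzawa-main} by passing to weight-homogeneous components.

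First I will check that everything in sight is compatible with the weight grading.

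\begin{itemize}
\item The space $\Fad$ is graded by weight. Indeed, $\ad(x_1)$ shifts weight by one and $\F_2$ is graded.
\item The conditions defining $\Fadred$ are homogeneous. These are the absence of a linear term and $\Psi^{11}+\Psi^{10}+\Psi^{01}=0$, and the decomposition $\Psi=\sum x_i\Psi^{ij}x_j$ respects weight.
\item The conditions defining $\addmr_0$ are homogeneous. The vanishing of low-weight coefficients is clearly so. The condition $\Delta_*(\Psi_\#)=\Psi_\#\otimes1+1\otimes\Psi_\#$ is also homogeneous, because $\pi_{Y,\#}$ sends weight $n$ to total weight $n-1$ and $\Delta_*$ preserves total weight, $T$ being a scalar of weight one.
\end{itemize}

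Hence $\addmr_0\cap\Fadred=\bigoplus_n(\addmr_0\cap\Fadred)_n$. Moreover, its completion is $\prod_n(\addmr_0\cap\Fadred)_n$, which coincides with $\widehat{\addmr_0}\cap\cFadred$. This uses the fact that an element of the completed intersection lies in it if and only if each homogeneous component does.

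Next, take $\Psi_1,\Psi_2\in\addmr_0\cap\Fadred$. These are finite sums of homogeneous elements, and by bilinearity I may assume each $\Psi_i$ is homogeneous. Each $\Psi_i$ lies in $\widehat{\addmr_0}\cap\cFadred$, so Theorem~\ref{thm:Anzawa-main} gives $\{\Psi_1,\Psi_2\}_1\in\widehat{\addmr_0}\cap\cFadred$.

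On the other hand, $\{\Psi_1,\Psi_2\}_1=d_{\Psi_1}(\Psi_2)-d_{\Psi_2}(\Psi_1)$ is a polynomial, homogeneous of weight $\wt\Psi_1+\wt\Psi_2-1$. It lies in $\Fad$ because $\Fad$ is closed under $\{\mathchar`-,\mathchar`-\}_1$. So the bracket is an element of $\Fad$ lying in the completed intersection.

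It remains to see that the bracket lies in the uncompleted intersection. The bracket is a single homogeneous component, and by the first step each homogeneous component of an element of $\widehat{\addmr_0}\cap\cFadred$ lies in $\addmr_0\cap\Fadred$. Hence $\{\Psi_1,\Psi_2\}_1\in\addmr_0\cap\Fadred$. Antisymmetry and the Jacobi identity are inherited from $(\Fad,\{\mathchar`-,\mathchar`-\}_1)$.

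The only point needing care is the homogeneity of the harmonic condition for $\Psi_\#$, specifically that $\Delta_*$ extended $\Q[[T]]$-linearly respects total weight. This is immediate from the definition.
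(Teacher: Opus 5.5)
Your proposal is correct and follows the same route as the paper, which obtains the corollary from the completed statement for $\widehat{\addmr_0}\cap\cFadred$ by passing to weight-homogeneous components. You additionally spell out the homogeneity checks the paper leaves implicit, notably that $\pi_{Y,\#}$ lowers weight by one and that $\Delta_*$ preserves total weight.
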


\subsection*{Main Theorems}

We now state the following main theorem of this paper.
\begin{thm}[ = Main Theorem~\ref{mt:ism}]\label{thm:ism}
For a commutative associative $\Q$-algebra $R$, put
\[
\exp(\cFadred)(R): = \exp^{\cast_1}(R\cotimes \cFadred).
\]
Then, up to scalar extensions of $R$, the map $\Ad(x_1)$ induces the following group isomorphism
\[
\DMR_0(R) \xrightarrow{\sim} \AdDMR_0(R) \cap \exp(\cFadred)(R).
\]
Moreover, $\exp^{\cast_1}$ gives the functorial bijection
\[
R\cotimes (\widehat{\addmr}_0\cap\cFadred) \xrightarrow{\sim} \AdDMR_0(R) \cap \exp(\cFadred)(R)
\]
with respect to $R$.
\end{thm}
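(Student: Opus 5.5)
We reduce the group statement to a Lie algebra statement and then prove the Lie algebra statement weight by weight.

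\emph{Step 1: the Lie algebra isomorphism.} The first and central step is to show that
\[
\ad(x_1)\colon \widehat{\dmr_0}\xrightarrow{\sim}\widehat{\addmr}_0\cap\cFadred .
\]
For the inclusion $\subset$: Jarossay's embedding $\Ad_R(x_1)\colon\DMR_0(R)\to\AdDMR_0(R)$, applied to $R=\Q[\varepsilon]/(\varepsilon^2)$, gives $\ad(x_1)(\widehat{\dmr_0})\subset\widehat{\addmr}_0$. It then remains to check that $\ad(x_1)(\phi)$ satisfies \eqref{eq:Fad} for $\phi\in\widehat{\dmr_0}$. For this we use the known fact that $\dmr_0$ satisfies the linearized reduction (parity) relations, i.e.\ that $\dmr_0$ lies in the Lie algebra of $\mathcal{G}^{\mathrm{inert}}$ in the sense of Kawamura--Maesaka and Enriquez--Furusho. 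We then translate those relations through $\Psi=[x_1,\phi]$: writing $\Psi=x_1\phi-\phi x_1$, the components $\Psi^{ij}$ are expressed through the decomposition $\phi=\sum x_i\phi^{ij}x_j$, and \eqref{eq:Fad} becomes the linearized reduction relation.

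For the inclusion $\supset$: given $\Psi=[x_1,\psi]\in\widehat{\addmr}_0\cap\cFadred$, the element $\psi$ is unique once we require that it has no linear term. We must show $\psi\in\widehat{\dmr_0}$, and the only nontrivial condition is the harmonic one, $\Delta_*(\psi_*)=\psi_*\otimes1+1\otimes\psi_*$. The plan is to compare $\psi_*$ with $\Psi_\#$. The coefficient of $T^0$ in $\Psi_\#$ involves $\pi_Y(\psi x_1)$-type terms and is a shifted version of $\pi_Y(\psi)$, while the higher powers of $T$ encode $x_0^{k-1}x_1$-prefixed pieces. We would introduce a derivation-like operator $D$ on $\Q[[T;Y\rangle\rangle$ (differentiation in $T$, or extraction of the $T$-coefficient) and an operator $\rho$ relating $\pi_{Y,\#}$ to $\pi_Y$. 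We then prove that $D$ and $\rho$ commute and are compatible with $\Delta_*$, which is the content hinted at by \eqref{eq:com-D-rho}. Primitivity of $\Psi_\#$ then yields primitivity of $\pi_Y(\psi)$ modulo terms in $y_1$. The adjoint reduction relation \eqref{eq:Fad} is exactly what is needed to control the $y_1^n$ terms, so that the correction $\psi_{\corr}$ appears with the right coefficients (Lemma~\ref{lem:y1y1}). Finally, the low-weight conditions of Definition~\ref{df:addmr}(1) give conditions (1)--(3) of Definition~\ref{df:ds}.

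\emph{Step 2: passing to groups.} Since $\ad(x_1)$ intertwines the Ihara bracket $\{-,-\}$ with $\{-,-\}_1$, and the diagram \eqref{eq:dia-exp} commutes functorially in $R$, Theorem~\ref{thm:ds-group}(3) gives
\[
\Ad_R(x_1)(\DMR_0(R))=\exp^{\cast_1}\bigl(R\cotimes\ad(x_1)(\widehat{\dmr_0})\bigr)=\exp^{\cast_1}\bigl(R\cotimes(\widehat{\addmr}_0\cap\cFadred)\bigr).
\]
This set is contained in $\AdDMR_0(R)\cap\exp(\cFadred)(R)$. For the reverse inclusion, take $\Phi=\exp^{\cast_1}(\Psi)\in\AdDMR_0(R)$ with $\Psi\in R\cotimes\cFadred$. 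We show $\Psi\in R\cotimes\widehat{\addmr}_0$ by the usual filtered argument. Suppose $\Psi$ agrees up to weight $n$ with an element $\Psi'$ of $R\cotimes(\widehat{\addmr}_0\cap\cFadred)$. Then $\exp^{\cast_1}(\Psi')^{-1}\cast_1\Phi$ lies in $\AdDMR_0(R)$, provided we know that $\AdDMR_0$ is stable under $\cast_1$ by elements of the image. This stability holds because the image is a group, $\Ad_R(x_1)(\DMR_0(R))$, which acts on $\AdDMR_0(R)$ compatibly with the harmonic condition; this is the filtered analogue of Proposition~\ref{prop:ism}. The element $\exp^{\cast_1}(\Psi')^{-1}\cast_1\Phi$ is $x_1$ plus terms of weight greater than $n$, and its lowest-weight part is primitive and lies in $\addmr_0\cap\Fadred$. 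By Step 1 this lowest-weight part comes from $\dmr_0$, so we can correct $\Psi'$ in the next weight, and induction on $n$ concludes. Injectivity of $\Ad_R(x_1)$ is already known from Jarossay's result.

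\emph{Main obstacle.} The hardest step is the inclusion $\supset$ in Step 1, namely recovering the harmonic primitivity of $\psi_*$, including the $y_1$-correction, from the primitivity of $\Psi_\#$ together with \eqref{eq:Fad}. I would first attempt the depth-graded version, where the commutation of $D$ and $\rho$ is cleanest, and then lift it along the depth filtration.
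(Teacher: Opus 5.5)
Your architecture (a Lie algebra isomorphism, then exponentiation and a weight-by-weight induction) matches the paper's. However, Step~1, which carries all the content, is not proved in either direction.

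For $\ad(x_1)(\widehat{\dmr_0})\subset\cFadred$ you invoke as ``known'' that $\dmr_0$ satisfies the linearized reduction relations, i.e.\ lies in the Lie algebra of $\mathcal{G}^{\mathrm{inert}}$. You cite no such result, and the paper explicitly says this inclusion had not been established. Your translation of \eqref{eq:Fad} into those relations is also asserted without argument; they are phrased on harmonically regularized coefficients, not on the shuffle-side components $\Psi^{ij}$.

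For $\supset$ you give only a plan. The operators you propose (a $T$-derivative, a $\rho$ ``relating $\pi_{Y,\#}$ to $\pi_Y$'') are not the ones that work, and the reduction relation is placed in the wrong step. The missing mechanism is as follows.
\begin{itemize}
\item The derivation $D$ of $\Q\langle Y\rangle$ with $D(y_{n+1})=ny_n$ equals $\pi_Y\circ\partial_{x_0}$ on $\F_2x_1$, so it kills $\pi_Y(\psi x_1)$. Combined with $D\rho=\rho D$ for the cyclic rotation $\rho$, this gives $(n+1)R_{n+1}+D(R_n)=0$ for $R_n=\Psi^{(n)}-\Psi_{(n)}$.
\item Since $D$ is a $\Delta_*$-coderivation, primitivity of $R_0$ (minus the $T^0$-coefficient of $\Psi_\#$) propagates to every $R_n$.
\item Applying $\widetilde{\Delta}_*$ to $\sum_n[y_{n+1},\Psi^{(n)}]=-\sum_nR_ny_{n+1}$ yields \eqref{eq:reduced-com}. \emph{This} is where \eqref{eq:Fad} enters, in its $D$-propagated form \eqref{eq:com-red}, to make the right-hand side vanish.
\item Then $\ker\ad(\Delta_*(y_1))=\Q\langle y_1\rangle^{\otimes2}$ confines the non-primitive part of $\pi_Y(\psi)$ to the $y_1^m\otimes y_1^n$ coefficients.
\item Those coefficients are computed with no further use of \eqref{eq:Fad} (coassociativity, iterating $y_1^l*y_m$, and $\langle\psi\mid x_1^{l+1}\rangle=0$), and they cancel against $\psi_{\corr}$.
\end{itemize}
So the roles are the reverse of what you describe. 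Read backwards, the same identity also gives the other inclusion. For $\psi\in\dmr_0$ one has $\widetilde{\Delta}_*(\pi_Y(\psi))=-\widetilde{\Delta}_*(\psi_{\corr})\in\Q\langle y_1\rangle^{\otimes2}$. Together with $S_X(\psi)=-\psi$ and Racinet's vanishing of $\langle\psi\mid x_0^nx_1\rangle$ for odd $n$, this yields \eqref{eq:Fad} directly, with no appeal to $\mathcal{G}^{\mathrm{inert}}$.

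In Step~2, identifying the image via \eqref{eq:dia-exp} and Theorem~\ref{thm:ds-group} is fine. The reverse inclusion, however, relies on $\AdDMR_0(R)$ being stable under $\cast_1$-multiplication by $\Ad_R(x_1)(\DMR_0(R))$. That is not known; it is essentially Jarossay's question of whether $\AdDMR_0(R)$ is a group, and it does not follow from the Lie algebra statement. It is also unnecessary.
\begin{itemize}
\item Let $\Psi_n$ be the truncation of $\Psi$ below weight $n$, with components already in $R\otimes(\addmr_0\cap\Fadred)$. Then $\exp^{\cast_1}(\Psi_n)\in\Ad_R(x_1)(\DMR_0(R))\subset\AdDMR_0(R)$.
\item Since $\Psi$ has no terms of weight $<4$, $\exp^{\cast_1}(\Psi)-\exp^{\cast_1}(\Psi_n)$ equals $\Psi^{[n]}$ plus terms of weight $>n$.
\item For $A=\exp^{\cast_1}(\Psi)_\#$ and $B=\exp^{\cast_1}(\Psi_n)_\#$, the identity $\Delta_*(A-B)=(A-B)\otimes A+B\otimes(A-B)$ has lowest-weight part $\Psi^{[n]}_\#\otimes1+1\otimes\Psi^{[n]}_\#$.
\end{itemize}
This uses only that both elements satisfy the grouplike condition, not any group structure on $\AdDMR_0(R)$.
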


To prove Theorem~\ref{thm:ism}, we have to show
\begin{itemize}
\item $\ad(x_1)(\dmr_0) \subset \Fadred$.
\item The surjectivity of $\ad(x_1)|_{\dmr_0}$.
\end{itemize}
Injectivity is immediate because $\ker \ad(x_1) = \Q x_1$ and all elements of $\dmr_0$ have no linear terms,
We prove Theorem~\ref{thm:ism} in the following section.

\section{A proof of Theorem~\ref{thm:ism} (=Main Theorem~\ref{mt:ism})}

An essential proposition to prove Theorem~\ref{thm:ism} is the following:

\begin{prop}\label{prop:ism}
The adjoint map $\ad(x_1): \dmr_0 \longrightarrow \addmr_0\cap\Fadred; \phi\longmapsto [x_1,\phi]$ is a Lie algebra isomorphism.
\end{prop}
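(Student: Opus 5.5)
The plan is to prove three facts about $\ad(x_1)$: it maps $\dmr_0$ into $\addmr_0$, it maps $\dmr_0$ into $\Fadred$, and every element of $\addmr_0\cap\Fadred$ comes from $\dmr_0$. The Lie algebra compatibility is then automatic. Indeed, $\ad(x_1)(\{\psi_1,\psi_2\})=\{\ad(x_1)\psi_1,\ad(x_1)\psi_2\}_1$, Theorem~\ref{thm:ds-Lie} makes $\dmr_0$ a Lie algebra, and Corollary~\ref{cor:Anzawa-main-finite} makes the target a Lie algebra. Injectivity is already noted: $\ker\ad(x_1)=\Q x_1$, and elements of $\dmr_0$ have no linear term. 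So only well-definedness and surjectivity need proof.

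\emph{Inclusion $\ad(x_1)(\dmr_0)\subset\addmr_0$.} Apply the injective map $\Ad_R(x_1):\DMR_0(R)\to\AdDMR_0(R)$ with $R=\Q[\varepsilon]/(\varepsilon^2)$. The element $1+\varepsilon\phi$ lies in $\DMR_0(R)$, and its image is $x_1+\varepsilon[x_1,\phi]$. Comparing with the description of $\widehat{\addmr}_0$ by $\varepsilon$-points gives $[x_1,\phi]\in\widehat{\addmr}_0$. Restricting to weight-homogeneous components gives the non-completed statement.

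\emph{Inclusion $\ad(x_1)(\dmr_0)\subset\Fadred$.} Write $\phi=\sum_i x_i\,\partial_i^{\mathrm{l}}\phi=\sum_i(\partial_i^{\mathrm{r}}\phi)\,x_i$. For $\Psi=x_1\phi-\phi x_1$, expanding gives
\[
\Psi^{11}+\Psi^{10}+\Psi^{01}=D^{\mathrm r}\phi-D^{\mathrm l}\phi,
\]
where $D^{\mathrm l}$ deletes the first letter of each word and $D^{\mathrm r}$ deletes the last letter. So the reduction relation for $\Psi$ is the symmetry $D^{\mathrm r}\phi=D^{\mathrm l}\phi$. Since $\phi$ is a Lie element, $S_X\phi=-\phi$, and hence $D^{\mathrm r}\phi=-\rho(D^{\mathrm l}\phi)$, where $\rho$ is the signed reversal of words. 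The condition therefore reads $\rho(D\phi)=-D\phi$ with $D=D^{\mathrm l}$.

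I would verify this parity statement from the harmonic condition. Project $D\phi$ via $\pi_Y$ and use that $D$ commutes with $\rho$, suitably transported to the $Y$-side. Primitivity of $\phi_*$ for $\Delta_*$ then forces $\pi_Y(\phi)$ to be anti-invariant under the reversal of $Y$-words, modulo the correction term $\phi_{\corr}$. The contributions from words ending in $x_0$, which $\pi_Y$ kills, are recovered by the shuffle regularisation, i.e.\ by rewriting them in terms of words ending in $x_1$. The $y_1^n$-part must be handled separately through a lemma on the coefficients of $y_1y_1\cdots$, which is presumably Lemma~\ref{lem:y1y1}. I expect this step to be the main technical obstacle, because the correction term $\phi_{\corr}$ and the regularisation interact.

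\emph{Surjectivity.} Take $\Psi=[x_1,\psi]\in\addmr_0\cap\Fadred$ with $\psi$ having no linear term. Such a $\psi$ is unique, since any two preimages differ by a multiple of $x_1$. I must show $\psi\in\dmr_0$. Conditions (1) and (2) of Definition~\ref{df:ds} hold by the choice of $\psi$. For condition (3), $\langle\psi\mid x_0x_1\rangle$ is read off from the weight-$3$ part of $\Psi$, which vanishes.

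The real content is primitivity of $\psi_*$. Here $\Psi_{\#}$ records the words of $\Psi$ ending in $x_1$, with the first block $x_0^{k_1-1}x_1$ recorded by $T^{k_1-1}$. Using $\Psi=x_1\psi-\psi x_1$, I would express $\Psi_{\#}$ through $\pi_Y(\psi)$ and a left-derivative term. Writing
\[
\Psi_{\#}=\sum_k T^{k-1}\,\Psi_{\#,k},
\]
the $T^0$-coefficient is essentially $\pi_Y(D^{\mathrm l}$-type term$)-\pi_Y(\psi)\cdot(\text{shift})$. The reduction relation (equivalently $\rho(D\psi)=-D\psi$, by the same computation as in the previous step) is exactly what is needed to identify this with $\pi_Y(\psi)$ up to $y_1$-power corrections. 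Primitivity of $\Psi_{\#}$ then yields primitivity of $\psi_*$, with the correction $\psi_{\corr}$ produced by the $T^{k-1}y_1\cdots$ terms.

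Finally, I would argue by weight induction. I would treat the depth-graded leading part first, where $D$ and $\rho$ commute cleanly, and then lift to the filtered statement. At each weight, the discrepancy from primitivity lies in a component already controlled by lower weights.
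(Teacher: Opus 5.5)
Your formal steps are fine: injectivity, compatibility of brackets, the $\varepsilon$-point argument for $\ad(x_1)(\dmr_0)\subset\addmr_0$ (which rests on the same result of Jarossay that the paper cites), conditions (1)--(3) in the surjectivity step, and the identity $\Psi^{11}+\Psi^{10}+\Psi^{01}=D^{\mathrm r}\phi-D^{\mathrm l}\phi$, which correctly reformulates the reduction relation. But the two substantive steps are only gestured at, and the concrete claims you make for them are false. For the inclusion into $\Fadred$, primitivity of $\phi_*$ does not make $\pi_Y(\phi)$ anti-invariant under reversal of $Y$-words modulo $\phi_{\corr}$. Primitive elements are anti-invariant under the antipode of $(\Q\langle Y\rangle,\Delta_*)$, and that antipode is not reversal. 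For example, take $\phi=[x_0,[x_0,x_1]]+[[x_0,x_1],x_1]\in\dmr_0$. Then $\pi_Y(\phi)=y_3+y_2y_1-2y_1y_2$ and $\phi_{\corr}=\tfrac13y_1^3$, and neither the sum nor the difference of $\pi_Y(\phi)$ and its reversal lies in $\Q y_1^3$. Moreover $D^{\mathrm l}\phi$ is not a Lie element (here it equals $(x_0-x_1)^2$), so no shuffle regularisation recovers its coefficients on words ending in $x_0$ from $\pi_Y(D^{\mathrm l}\phi)$. The paper avoids this by using the injectivity of $f\mapsto\pi_Y(fx_1)$. Writing $\pi_Y(\psi x_1)=\sum_n y_{n+1}\Psi^{(n)}=\sum_n\Psi_{(n)}y_{n+1}$, the reduction relation becomes the $Y$-identity $\Psi_{(0)}+\sum_{i\ge1}\Psi_{(i)}y_i=\Psi^{(0)}+\sum_{i\ge1}y_i\Psi^{(i)}$.

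The missing idea, which drives both directions in the paper, is the following. Put $R_n=\Psi^{(n)}-\Psi_{(n)}$, the coefficients of $\pi_Y(\psi x_1)-\rho\,\pi_Y(\psi x_1)$, where $\rho$ is the cyclic rotation of $Y$-words (not your signed reversal).

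\textbf{Common step.} From $D\rho=\rho D$ and $D\pi_Y(\psi x_1)=0$ one gets $(n+1)R_{n+1}+D(R_n)=0$. Since $D$ is a $\Delta_*$-coderivation and $R_0$ is minus the $T^0$-coefficient of $\Psi_{\#}$, every $R_n$ is primitive. Now apply $\widetilde{\Delta}_*$ to $\sum_n[y_{n+1},\Psi^{(n)}]=-\sum_nR_ny_{n+1}$, using primitivity of the $R_n$ and of the $\Psi^{(n)}$ for $n\ge1$. This writes $[\Delta_*(y_1),\widetilde{\Delta}_*(\pi_Y(\psi))]$ as an explicit tensor.

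\textbf{Surjectivity.} The reduction relation, propagated to all shifts by applying $D$, kills this tensor. Hence $\widetilde{\Delta}_*(\pi_Y(\psi))$ lies in the centralizer $\Q\langle y_1\rangle^{\otimes2}$ of $\Delta_*(y_1)$. The remaining coefficients are then computed by coassociativity and the recursion $\langle\pi_Y(\psi)\mid y_1^l*y_m\rangle=0$ for $m\ge2$. This gives $\langle\Delta_*(\pi_Y(\psi))\mid y_1^l\otimes y_1\rangle=(-1)^{l+1}\langle\psi\mid x_0^lx_1\rangle$, which cancels against $\psi_{\corr}$.

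\textbf{Inclusion into $\Fadred$.} The commutator vanishes because $\widetilde{\Delta}_*(\pi_Y(\psi))=-\widetilde{\Delta}_*(\psi_{\corr})$. The $(-)\otimes y_1$ component of the tensor then gives the $Y$-form of the reduction relation, once one uses $\langle\psi\mid x_0^nx_1\rangle=0$ for odd $n$.

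Your surjectivity sketch does not contain this. The $T^0$-coefficient of $\Psi_{\#}$ is $\pi_Y(\psi)-\Psi^{(0)}$, and $\Psi^{(0)}$ is not a series in $y_1$; for the example above, $\Psi^{(0)}=y_3-2y_2y_1+y_1y_2$. So what must be shown is $\widetilde{\Delta}_*(\Psi^{(0)})=-\widetilde{\Delta}_*(\psi_{\corr})$. This needs the primitivity of all higher $T$-coefficients together with the reduction relation, not the reduction relation alone. The closing weight induction also has nothing to act on: every defining condition is weight-homogeneous, so $\psi$ may be taken homogeneous.
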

Though the fact $\ad(x_1)(\dmr_0) \subset \addmr_0$ is already proved,  $\ad(x_1)(\dmr_0) \subset \Fadred$ is not.
In our proof of Proposition~\ref{prop:ism}, we first prove that, under $\ad(x_1)(\dmr_0) \subset \Fadred$, $\psi \in \dmr_0$ holds if $[x_1,\psi] \in \addmr_0 \cap \Fadred$.
We prove such inclusion later.

\subsection*{Some operations}

We first define $D$ as a derivation on $\Q\langle Y \rangle$ such that
\[
D(y_1)=0,\qquad D(y_{n+1})=ny_n
\]
for $n\geq 1$, and define $\rho$ as the cyclic operator on
$\Q\langle Y\rangle$ such that
\[
\rho(1)=1,\qquad
\rho(y_{i_1}\cdots y_{i_s})
=
y_{i_s}y_{i_1}\cdots y_{i_{s-1}},
\qquad
(i_1,\ldots,i_s\in\mathbb Z_{>0}).
\]

Then, the following holds.

\begin{lem}{[\cite[(1) = \S~4.2.2, (3) Remark below Corollary~4.2.6]{Racinet}}]\label{lem:D-rho}
\begin{enumerate}
\item\label{lem:D-rho-1}
For $\phi\in\ide{F}_2$ with no linear term, we have
\begin{align}
D(\pi_Y(\phi x_1))=0.
\label{eq:D-vanish}
\end{align}

\item\label{lem:D-rho-2}
We have
\begin{align}
D\circ\rho=\rho\circ D.
\label{eq:com-D-rho}
\end{align}

\item\label{lem:D-rho-3}
The map $D$ is a coderivation with respect to $\Delta_*$, namely,
\begin{align}
\Delta_*\circ D
=
(D\otimes\id+\id\otimes D)\circ\Delta_*.
\label{eq:D-coderivation}
\end{align}
\end{enumerate}
\end{lem}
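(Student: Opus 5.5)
The statement collects three facts attributed to Racinet. I would prove each directly: (1) by lifting $D$ to a derivation of $\Q\langle X\rangle$, and (2), (3) by checks on words and on generators.

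\emph{Part (1).} Let $\delta$ be the (non-homogeneous) derivation of $\Q\langle X\rangle$ with $\delta(x_0)=1$ and $\delta(x_1)=0$.

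First I would check that $\delta$ lifts $D$ on words ending in $x_1$. Write such a word as $x_0^{k_1-1}x_1\cdots x_0^{k_r-1}x_1$. Then $\delta$ removes one $x_0$ at a time, so the block $x_0^{k_i-1}x_1$ contributes $(k_i-1)$ copies of the same word with this block replaced by $x_0^{k_i-2}x_1$. This is exactly $D(y_{k_i})=(k_i-1)y_{k_i-1}$, including the case $D(y_1)=0$. Since $\delta(ux_1)=\delta(u)x_1$, we get
\[
D\bigl(\pi_Y(wx_1)\bigr)=\pi_Y\bigl(\delta(w)x_1\bigr)\qquad (w\in\Q\langle X\rangle).
\]

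Next I would show that $\delta$ annihilates every Lie polynomial without linear term. Such an element is a linear combination of iterated brackets $[a_1,[a_2,\ldots,[a_{n-1},a_n]\ldots]]$ with $n\ge 2$ and $a_i\in X$. Applying the Leibniz rule, each term replaces a single letter $x_0$ by the scalar $1$. Because $n\ge2$, that letter always sits inside some bracket $[1,\cdot]$ or $[\cdot,1]$, which vanishes. Hence $\delta(\phi)=0$ for $\phi\in\ide{F}_2$ with no linear term, and therefore $D(\pi_Y(\phi x_1))=\pi_Y(\delta(\phi)x_1)=0$. This is \eqref{eq:D-vanish}.

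\emph{Part (2).} On a word $y_{i_1}\cdots y_{i_s}$, the derivation $D$ is the sum over positions $p$ of the word with the letter $y_{i_p}$ replaced by $(i_p-1)y_{i_p-1}$. The map $\rho$ only permutes positions cyclically and does not change the letters. So applying $\rho$ to the $p$-th term of $D(w)$ gives the term of $D(\rho(w))$ at the rotated position. Summing over $p$ gives \eqref{eq:com-D-rho}, and both sides send $1$ to $0$.

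\emph{Part (3).} Both $\Delta_*\circ D$ and $(D\otimes\id+\id\otimes D)\circ\Delta_*$ are $\Delta_*$-derivations, i.e.\ maps $f$ satisfying $f(uv)=f(u)\Delta_*(v)+\Delta_*(u)f(v)$. This follows because $\Delta_*$ is an algebra map and $D$, $D\otimes\id+\id\otimes D$ are derivations. So it suffices to compare them on the generators $y_k$.
\begin{itemize}
\item The left side is $(k-1)\bigl(y_{k-1}\otimes1+1\otimes y_{k-1}+\sum_{a+b=k-1,\,a,b>0}y_a\otimes y_b\bigr)$.
\item On the right side, the terms $Dy_k\otimes1$ and $1\otimes Dy_k$ give the edge terms with coefficient $k-1$. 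The coefficient of $y_a\otimes y_b$ with $a,b>0$ comes from $(i,j)=(a+1,b)$, contributing $a$, and from $(i,j)=(a,b+1)$, contributing $b$. The total is $a+b=k-1$. The relation $D(y_1)=0$ is consistent with this count, since a letter $y_1$ carries coefficient $0$.
\end{itemize}
This proves \eqref{eq:D-coderivation}.

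The only step requiring an idea is the vanishing of $\delta$ on Lie elements in part (1); parts (2) and (3) are routine bookkeeping.
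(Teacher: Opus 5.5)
Your proposal is correct and matches the paper's proof essentially step by step. In (1), your $\delta$ is the paper's $\partial_{x_0}$, used through the same intertwining $D(\pi_Y(wx_1))=\pi_Y(\partial_{x_0}(w)x_1)$; (2) and (3) are the same word-level and generator-level checks. The one difference is that you prove $\partial_{x_0}(\phi)=0$ yourself, via the Leibniz rule on nested brackets, where the paper cites Racinet for it.
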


\begin{proof}
(\ref{lem:D-rho-1}).
Let $\del_{x_0}$ be the derivation on $\Q\langle X\rangle$ given by
\[
\del_{x_0}(x_0)=1,\qquad
\del_{x_0}(x_1)=0.
\]
As indicated in \cite[\S\S 4.2]{Racinet}, we have
\[
\del_{x_0}(\phi)=0.
\]
Hence,
\[
\del_{x_0}(\phi x_1)
=
\del_{x_0}(\phi)x_1
+
\phi\del_{x_0}(x_1)
=
0.
\]
Since
\[
\pi_Y\circ\del_{x_0}\mid_{\F_2x_1}
=
D\circ\pi_Y\mid_{\F_2x_1}  ,
\]
we obtain
\[
D(\pi_Y(\phi x_1))
=
\pi_Y(\del_{x_0}(\phi x_1))
=
0.
\]

(\ref{lem:D-rho-2}).
For $y_{k_1}\cdots y_{k_r} \in Y^*$, we have
\begin{align*}
\rho\circ D(y_{k_1}\cdots y_{k_r})
&=
\sum_{\substack{1\leq i\leq r\\ k_i\geq2}}
(k_i-1)
\rho(
y_{k_1}\cdots y_{k_{i-1}}
y_{k_i-1}
y_{k_{i+1}}\cdots y_{k_r})\\
&=
D(y_{k_r}y_{k_1}\cdots y_{k_{r-1}})\\
&=
D\circ\rho(y_{k_1}\cdots y_{k_r}),
\end{align*}
as required.

(\ref{lem:D-rho-3}).
Put $y_0:=1$. Recall that
\[
\Delta_*(y_n)
=
\sum_{i+j=n}y_i\otimes y_j.
\]
Then we have
\begin{align*}
&(D\otimes\id+\id\otimes D)\Delta_*(y_{1})\\
=&(D\otimes\id+\id\otimes D)(y_1 \otimes 1 + 1\otimes y_1)\\
=&0\\
=&\Delta_*(D(y_1))
\end{align*}
and, for $n\geq1$, we have
\begin{align*}
&(D\otimes\id+\id\otimes D)\Delta_*(y_{n+1})\\
&=
(D\otimes\id+\id\otimes D)
\left(
\sum_{i+j=n+1}y_i\otimes y_j
\right)\\
&=
n\sum_{i+j=n}y_i\otimes y_j\\
&=
\Delta_*(ny_n)\\
&=
\Delta_*\circ D(y_{n+1}).
\end{align*}
Since $\Delta_*$ is an algebra homomorphism and $D$ is a derivation,
the assertion follows.
\end{proof}

\subsection*{Auxiliary series}

For $\Psi\in\Fad$, we take $\psi\in\F_2$ with $\Psi=[x_1,\psi]$ and $\langle\psi\mid x_1\rangle=0$.
Throughout this section, we assume that $\langle\psi\mid x_0\rangle=0$, which is equivalent to $\langle\Psi\mid x_1x_0\rangle=0$.
Put
\[
\pi_Y(\psi x_1)
=
\sum_{n\geq0}y_{n+1}\Psi^{(n)}
=
\sum_{n\geq0}\Psi_{(n)}y_{n+1}
\]
and define $R_n$ by
\[
\pi_Y(\psi x_1)
-
\rho\circ\pi_Y(\psi x_1)
=
\sum_{n\geq0}y_{n+1}R_n.
\]
Since $\rho\bigl(\sum_{n\ge0}\Psi_{(n)}y_{n+1}\bigr)=\sum_{n\ge0}y_{n+1}\Psi_{(n)}$, we have
\[
R_n=\Psi^{(n)}-\Psi_{(n)}\qquad(n\ge0).
\]
Moreover, since $\pi_Y$ annihilates the words ending with $x_0$ and $\langle\psi\mid1\rangle=0$, the part of $\pi_Y(\psi x_1)$ ending with $y_1$ is $\pi_Y(\psi)y_1$, that is,
\[
\Psi_{(0)}=\pi_Y(\psi).
\]

\begin{lem}\label{lem:recurrent}
For every $n\geq0$, we have
\begin{align}
(n+1)R_{n+1}+D(R_n)=0.
\label{eq:reccurent}
\end{align}
\end{lem}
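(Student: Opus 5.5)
The plan is to obtain \eqref{eq:reccurent} by applying $D$ to the series $\pi_Y(\psi x_1)-\rho\circ\pi_Y(\psi x_1)=\sum_{n\ge0}y_{n+1}R_n$. We first show that this image is zero, and then read off the coefficient of each leading letter $y_{n+1}$.

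First, set $P:=\pi_Y(\psi x_1)$. By our standing assumptions $\langle\psi\mid x_1\rangle=\langle\psi\mid x_0\rangle=0$, and $\psi\in\F_2$ has no constant term. Hence $\psi$ has no linear term, and Lemma~\ref{lem:D-rho}~(\ref{lem:D-rho-1}) gives $D(P)=0$. Next, by the commutativity \eqref{eq:com-D-rho} of Lemma~\ref{lem:D-rho}~(\ref{lem:D-rho-2}), we get $D(\rho(P))=\rho(D(P))=0$. Therefore
\[
D\Bigl(\sum_{n\ge0}y_{n+1}R_n\Bigr)=D(P)-D(\rho(P))=0 .
\]
(If one works in the completion, $D$ and $\rho$ preserve weight, so this is harmless and can be checked weight by weight.)

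Second, since $D$ is a derivation with $D(y_1)=0$ and $D(y_{n+1})=ny_n$, we compute
\[
0=\sum_{n\ge0}\bigl(n\,y_nR_n+y_{n+1}D(R_n)\bigr)
=\sum_{n\ge0}y_{n+1}\bigl((n+1)R_{n+1}+D(R_n)\bigr).
\]
Here the $n=0$ term $0\cdot y_0R_0$ vanishes, and we reindex $n\mapsto n+1$ in the first sum.

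Finally, every word in $Y^*$ other than $1$ starts with a unique letter. So the decomposition $\Q\langle Y\rangle=\Q\oplus\bigoplus_{m\ge1}y_m\Q\langle Y\rangle$ is direct, and left multiplication by $y_m$ is injective. Each coefficient must therefore vanish, which yields $(n+1)R_{n+1}+D(R_n)=0$ for all $n\ge0$.

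The only point requiring care is verifying the hypothesis of Lemma~\ref{lem:D-rho}~(\ref{lem:D-rho-1}), that $\psi$ has no linear term. This is exactly where the normalization $\langle\psi\mid x_1\rangle=0$ and the assumption $\langle\psi\mid x_0\rangle=0$ are used. Everything else is a direct computation.
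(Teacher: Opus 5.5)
Your proposal is correct and follows essentially the same route as the paper. The paper also applies $D$ to $\pi_Y(\psi x_1)-\rho\circ\pi_Y(\psi x_1)=\sum_{n\ge0}y_{n+1}R_n$, kills it using \eqref{eq:com-D-rho} and \eqref{eq:D-vanish}, expands by the derivation rule, and reads off the coefficient of each leading letter $y_{n+1}$. Your added checks, that $\psi$ has no linear term and that the leading-letter decomposition is direct, just make explicit what the paper leaves implicit.
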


\begin{proof}
By Lemma~\ref{lem:D-rho}, we have
\begin{align*}
&D\left(
\pi_Y(\psi x_1)
-
\rho\circ\pi_Y(\psi x_1)
\right)\\
\stackrel{\eqref{eq:com-D-rho}}{=} &
D(\pi_Y(\psi x_1))
-
\rho\circ D(\pi_Y(\psi x_1))\\
\stackrel{\eqref{eq:D-vanish}}{=}&
0.
\end{align*}
Therefore,
\begin{align*}
0
&=
D\left(
\sum_{n\geq0}y_{n+1}R_n
\right)\\
&=
\sum_{n\geq0}
\left(
ny_nR_n+y_{n+1}D(R_n)
\right)\\
&=
\sum_{n\geq0}
y_{n+1}
\left(
(n+1)R_{n+1}+D(R_n)
\right).
\end{align*}
Hence,
\[
(n+1)R_{n+1}+D(R_n)=0
\]
as asserted.
\end{proof}

We first determine $R_0$.

\begin{lem}\label{lem:K0}
For $\Psi=[x_1,\psi]\in\Fad$, we have
\[
R_0=\Psi^{(0)}-\pi_Y(\psi).
\]
\end{lem}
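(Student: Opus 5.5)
The plan is to read off $R_0$ directly from the definitions just introduced, comparing the part of $\pi_Y(\psi x_1)$ that begins with $y_1$ and the part that ends with $y_1$. No deeper input, such as Lemma~\ref{lem:D-rho} or Lemma~\ref{lem:recurrent}, should be needed.

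First, I would recall that $R_n$ was defined by
\[
\pi_Y(\psi x_1)-\rho\circ\pi_Y(\psi x_1)=\sum_{n\ge0}y_{n+1}R_n .
\]
Since $\rho$ moves the last letter to the front, $\rho\bigl(\sum_{n\ge0}\Psi_{(n)}y_{n+1}\bigr)=\sum_{n\ge0}y_{n+1}\Psi_{(n)}$. Comparing the components whose first letter is $y_{n+1}$ gives $R_n=\Psi^{(n)}-\Psi_{(n)}$, as already observed before the lemma. Specializing to $n=0$ gives $R_0=\Psi^{(0)}-\Psi_{(0)}$.

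Second, I would identify $\Psi_{(0)}$, the coefficient series of the words of $\pi_Y(\psi x_1)$ ending in $y_1$. Write $\psi=\sum_w\langle\psi\mid w\rangle w$. A word $wx_1$ is sent by $\pi_Y$ to a word ending in $y_1$ exactly when $w$ is empty or $w$ itself ends in $x_1$, since a final block $x_0^{k-1}x_1$ with $k=1$ means that the letter preceding the last $x_1$ is not $x_0$. If $w$ ends in $x_0$, the last letter of $\pi_Y(wx_1)$ is $y_k$ with $k\ge2$. The empty word contributes nothing because $\langle\psi\mid1\rangle=0$. For $w=w'x_1$ we have $\pi_Y(wx_1)=\pi_Y(w)y_1$. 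Also, $\pi_Y(w)=0$ when $w$ ends in $x_0$, so those words contribute nothing to $\pi_Y(\psi)$ either. Hence the $y_1$-ending part of $\pi_Y(\psi x_1)$ is $\pi_Y(\psi)y_1$, that is, $\Psi_{(0)}=\pi_Y(\psi)$. Substituting into the first step yields $R_0=\Psi^{(0)}-\pi_Y(\psi)$.

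The only point requiring care is the bookkeeping in the second step. One must check that $\pi_Y$ kills exactly the words ending in $x_0$, and that removing the final $x_1$ commutes with $\pi_Y$ on words ending in $x_1x_1$. This is the step I would write out explicitly, but I expect it to be a routine verification on words rather than a genuine obstacle. If the intended statement is instead the more substantive identification of $\Psi^{(0)}$ in terms of the components $\Psi^{1j}$ of $\Psi=[x_1,\psi]$, I would do the analogous bookkeeping for the words $x_1w x_1$ of $\psi x_1$. These are the words whose image under $\pi_Y$ starts with $y_1$, so $\Psi^{(0)}=\pi_Y(w x_1)$ summed over the coefficients of $\psi$ on $x_1w$. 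This in turn relates to $x_1\psi x_1$ and hence to the $x_1\Psi^{11}x_1$ part of $\Psi$.
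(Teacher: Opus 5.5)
Your argument is correct and is essentially the paper's proof. The paper compares the $y_1$-initial parts of $\pi_Y(\psi x_1)$ and $\rho\circ\pi_Y(\psi x_1)$, the latter being $y_1\Psi_{(0)}=y_1\pi_Y(\psi)$; this is exactly your combination of $R_0=\Psi^{(0)}-\Psi_{(0)}$ with the bookkeeping $\Psi_{(0)}=\pi_Y(\psi)$, both of which the text already notes just before the lemma, and your closing paragraph on relating $\Psi^{(0)}$ to $\Psi^{11}$ is not needed here.
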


\begin{proof}
By definition, the part of $\pi_Y(\psi x_1)$ beginning with $y_1$ is $y_1\Psi^{(0)}.$
On the other hand, the part of
$\rho\circ\pi_Y(\psi x_1)$ beginning with $y_1$ is $y_1\pi_Y(\psi).$
Hence, $R_0=\Psi^{(0)}-\pi_Y(\psi)$, as claimed.
\end{proof}

\begin{lem}\label{lem:Phi-com}
For $\Psi=[x_1,\psi]\in\Fad$, we have
\begin{align}
\sum_{n\geq0}[y_{n+1},\Psi^{(n)}]
=
- \sum_{n\ge 0} R_{n}y_{n+1}
\label{eq:Phi-com}
\end{align}
\end{lem}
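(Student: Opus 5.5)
The identity is purely formal: expand the commutators and use the two decompositions of $\pi_Y(\psi x_1)$ fixed in the definition of the auxiliary series, together with the identity $R_n=\Psi^{(n)}-\Psi_{(n)}$ recorded just before Lemma~\ref{lem:recurrent}. No property of $\Psi$ beyond these definitions is needed. In particular, neither the adjoint reduction relations nor Lemma~\ref{lem:recurrent} enter.

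\textbf{Step 1: expand the commutators.} We have
\[
\sum_{n\ge0}[y_{n+1},\Psi^{(n)}]
=\sum_{n\ge0}y_{n+1}\Psi^{(n)}-\sum_{n\ge0}\Psi^{(n)}y_{n+1}.
\]
This is a finite sum, since $\psi\in\F_2$ is a polynomial, so $\Psi^{(n)}=0$ for $n$ large.

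\textbf{Step 2: rewrite the first sum.} By definition,
\[
\sum_{n\ge0}y_{n+1}\Psi^{(n)}=\pi_Y(\psi x_1)=\sum_{n\ge0}\Psi_{(n)}y_{n+1}.
\]
Substituting this gives
\[
\sum_{n\ge0}[y_{n+1},\Psi^{(n)}]
=\sum_{n\ge0}\Psi_{(n)}y_{n+1}-\sum_{n\ge0}\Psi^{(n)}y_{n+1}
=-\sum_{n\ge0}\bigl(\Psi^{(n)}-\Psi_{(n)}\bigr)y_{n+1}.
\]

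\textbf{Step 3: conclude.} Applying $R_n=\Psi^{(n)}-\Psi_{(n)}$ for every $n\ge0$ yields exactly \eqref{eq:Phi-com}.

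The only point requiring a moment's care is that identity $R_n=\Psi^{(n)}-\Psi_{(n)}$. It rests on $\rho$ moving the last letter to the front, so that $\rho\bigl(\sum_n\Psi_{(n)}y_{n+1}\bigr)=\sum_n y_{n+1}\Psi_{(n)}$, and on the uniqueness of the decomposition $\sum_n y_{n+1}(\cdot)$ by first letter. This was already checked in the text, so the proof is a two-line computation with no real obstacle.
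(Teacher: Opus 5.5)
Your proof is correct and is essentially the paper's argument. The paper applies $\rho^{-1}$ to the defining relation $\pi_Y(\psi x_1)-\rho\circ\pi_Y(\psi x_1)=\sum_{n\ge0}y_{n+1}R_n$. You instead substitute the already-recorded identity $R_n=\Psi^{(n)}-\Psi_{(n)}$ together with the equality of the two decompositions of $\pi_Y(\psi x_1)$, which amounts to the same bookkeeping.
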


\begin{proof}
Since
\[
\pi_Y(\psi x_1)
=
\sum_{n\geq0}y_{n+1}\Psi^{(n)},
\]
we have
\[
\rho^{-1}\circ\pi_Y(\psi x_1)
=
\sum_{n\geq0}\Psi^{(n)}y_{n+1}.
\]
Therefore,
\begin{align*}
&\rho^{-1}\left(
\pi_Y(\psi x_1)
-
\rho\circ\pi_Y(\psi x_1)
\right)\\
&\qquad=
\sum_{n\geq0}
\left(
\Psi^{(n)}y_{n+1}
-
y_{n+1}\Psi^{(n)}
\right)\\
&\qquad=
-\sum_{n\geq0}[y_{n+1},\Psi^{(n)}].
\end{align*}
On the other hand, by the definition of $R_n$,
\[
\rho^{-1}
\left(
\sum_{n\geq0}y_{n+1}R_n
\right)
=
\sum_{n\geq0}R_ny_{n+1}.
\]
Hence,
\[
\sum_{n\geq0}[y_{n+1},\Psi^{(n)}]
=
-\sum_{n\geq0}R_ny_{n+1},
\]
as required.
\end{proof}

We now retake
\[
\Psi=[x_1,\psi]\in\addmr_0.
\]
Then, by comparison with the coefficients of powers of $T$,
\[
\Delta_*(\pi_{Y,\#}(\Psi))
=
\pi_{Y,\#}(\Psi)\otimes1
+
1\otimes\pi_{Y,\#}(\Psi)
\]
is equivalent to
\begin{align*}
&\Delta_*(\Psi^{(n)})
=
\Psi^{(n)}\otimes1
+
1\otimes\Psi^{(n)},
& n\ge 1,\\
&\Delta_*(\pi_Y(\psi)-\Psi^{(0)})
=
(\pi_Y(\psi)-\Psi^{(0)})\otimes1
+
1\otimes(\pi_Y(\psi)-\Psi^{(0)}).&
\end{align*}

By Lemma~\ref{lem:K0},
\[
R_0
=
-\left(
\pi_Y(\psi)-\Psi^{(0)}
\right),
\]
and hence $\Delta_*(R_0) = R_0 \otimes 1 + 1\otimes R_0$.

\begin{lem}\label{lem:Kn-prim}
For every $n\geq0$, we have
\[
\Delta_*(R_n)
=
R_n\otimes1+1\otimes R_n.
\]
\end{lem}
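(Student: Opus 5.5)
Induction on $n$ using the recurrence of Lemma~\ref{lem:recurrent}, $(n+1)R_{n+1}=-D(R_n)$, together with the coderivation property \eqref{eq:D-coderivation} of $D$.

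The base case $n=0$ was shown just before the statement. From $\Psi\in\addmr_0$ and Lemma~\ref{lem:K0} we have $R_0=-(\pi_Y(\psi)-\Psi^{(0)})$, and this is primitive for $\Delta_*$.

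For the inductive step, assume $\Delta_*(R_n)=R_n\otimes1+1\otimes R_n$. Apply \eqref{eq:D-coderivation}:
\begin{align*}
\Delta_*(D(R_n))
&=(D\otimes\id+\id\otimes D)\bigl(R_n\otimes1+1\otimes R_n\bigr)\\
&=D(R_n)\otimes1+R_n\otimes D(1)+D(1)\otimes R_n+1\otimes D(R_n).
\end{align*}
Since $D$ is a derivation, $D(1)=0$, so the middle terms vanish and $D(R_n)$ is primitive. Dividing the recurrence by $n+1$ (allowed over $\Q$) gives $R_{n+1}=-\frac{1}{n+1}D(R_n)$. Primitivity is preserved under scalar multiplication, so $R_{n+1}$ is primitive, which completes the induction.

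No step is a real obstacle; the content sits in Lemmas~\ref{lem:D-rho} and~\ref{lem:recurrent}. The only points to check are these:
\begin{itemize}
\item The recurrence holds for every $n\ge0$.
\item $D$ kills constants, so no $1\otimes1$-type terms appear.
\item The possibly infinite sums defining the $R_n$ are handled weight by weight. Each $R_n$ here is a polynomial, since $\Psi\in\Fad$ is non-completed, so this is immediate.
\end{itemize}
Note that this argument uses only $R_0$ from the $\addmr_0$ condition. The primitivity of $\Psi^{(n)}$ for $n\ge1$ is not needed, but combined with $R_n=\Psi^{(n)}-\Psi_{(n)}$ it will later give primitivity of the $\Psi_{(n)}$.
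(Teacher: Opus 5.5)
Your proof is correct and follows the paper's argument essentially verbatim. The base case is the primitivity of $R_0=-(\pi_Y(\psi)-\Psi^{(0)})$, which comes from the $\addmr_0$ condition via Lemma~\ref{lem:K0}, and the inductive step combines the coderivation property \eqref{eq:D-coderivation} of $D$ with the recurrence $R_{n+1}=-\frac{1}{n+1}D(R_n)$ from Lemma~\ref{lem:recurrent}.
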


\begin{proof}
We have already seen that $\Delta_*(R_0) = R_0 \otimes 1 + 1 \otimes R_0$.
Suppose that $\Delta_*(R_n) = R_n \otimes 1 + 1 \otimes R_n$ holds. Since $D$ is a coderivation,
\begin{align*}
\Delta_*(D(R_n))
&\stackrel{\eqref{eq:D-coderivation}}{=}
(D\otimes\id+\id\otimes D)
\Delta_*(R_n)\\
&\hspace{1ex}=
D(R_n)\otimes1
+
1\otimes D(R_n).
\end{align*}
By \eqref{eq:reccurent},
\[
R_{n+1}
=
-\frac{1}{n+1}D(R_n),
\]
and therefore $R_{n+1}$ is also so.
The assertion follows by induction.
\end{proof}

For $f\in\Q\langle Y\rangle$, put
\[
\widetilde{\Delta}_*(f)
:=
\Delta_*(f)-f\otimes1-1\otimes f.
\]
Since $\Delta_*(\pi_Y(\psi)-\Psi^{(0)}) = (\pi_Y(\psi)-\Psi^{(0)}) \otimes 1 + 1\otimes (\pi_Y(\psi)-\Psi^{(0)})$ holds, we have
\begin{align}
\widetilde{\Delta}_*(\pi_Y(\psi))
=
\widetilde{\Delta}_*(\Psi^{(0)}).
\label{eq:reduced-phi-Phi0}
\end{align}

\begin{lem}\label{lem:reduced-com}
For $\Psi=[x_1,\psi]\in\addmr_0$, we have
\begin{align}
\begin{aligned}
&
[\Delta_*(y_1),\widetilde{\Delta}_*(\pi_Y(\psi))]
\\
={}&
-\sum_{n\ge 0} \left(R_n + \sum_{i\ge 1} R_{n + i}y_i \right) \otimes y_{n + 1}\\
&-\sum_{n\ge 0} y_{n + 1} \otimes \left(R_n + \sum_{i\ge 1} R_{n + i}y_i \right) \\
&-
\sum_{n\geq1}
\sum_{\substack{i+j=n+1\\ i,j\geq1}}
\left(
[y_i,\Psi^{(n)}]\otimes y_j
+
y_i\otimes[y_j,\Psi^{(n)}]
\right).
\end{aligned} \label{eq:reduced-com}
\end{align}
\end{lem}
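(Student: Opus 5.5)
The proof will apply the algebra homomorphism $\Delta_*$ to the identity \eqref{eq:Phi-com} of Lemma~\ref{lem:Phi-com}. Rewritten with the $n=0$ commutator isolated on the left, that identity reads
\[
[y_1,\Psi^{(0)}]
=
-\sum_{n\ge0}R_ny_{n+1}
-\sum_{n\ge1}[y_{n+1},\Psi^{(n)}].
\]
We then compare both sides after $\Delta_*$. Besides the homomorphism property, the inputs are: the primitivity of $R_n$ for all $n\ge0$ (Lemma~\ref{lem:Kn-prim}); the primitivity of $\Psi^{(n)}$ for $n\ge1$, coming from $\Psi\in\addmr_0$; and the identity \eqref{eq:reduced-phi-Phi0}. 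Throughout, write $y_0:=1$, so that $\Delta_*(y_k)=\sum_{i+j=k}y_i\otimes y_j$.

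\emph{Left-hand side.} Since $\Delta_*$ is an algebra homomorphism, $\Delta_*([y_1,\Psi^{(0)}])=[\Delta_*(y_1),\Delta_*(\Psi^{(0)})]$. We split $\Delta_*(\Psi^{(0)})=\widetilde{\Delta}_*(\Psi^{(0)})+\Psi^{(0)}\otimes1+1\otimes\Psi^{(0)}$. By \eqref{eq:reduced-phi-Phi0}, the reduced part equals $\widetilde{\Delta}_*(\pi_Y(\psi))$. Since $\Delta_*(y_1)=y_1\otimes1+1\otimes y_1$, the primitive part contributes exactly $[y_1,\Psi^{(0)}]\otimes1+1\otimes[y_1,\Psi^{(0)}]$. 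So the left-hand side equals $[\Delta_*(y_1),\widetilde{\Delta}_*(\pi_Y(\psi))]$ plus this primitive expression.

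\emph{Right-hand side.} Using primitivity of $R_n$,
\[
\Delta_*(R_ny_{n+1})=(R_n\otimes1+1\otimes R_n)\sum_{i+j=n+1}y_i\otimes y_j .
\]
Expanding gives three groups of terms. The terms with $j=0$ or $i=0$ (from the first and second factor respectively) give $R_ny_{n+1}\otimes1+1\otimes R_ny_{n+1}$. The terms $R_n\otimes y_{n+1}$ and $y_{n+1}\otimes R_n$ appear explicitly in the target. The remaining terms are $\sum_{i,j\ge1,\,i+j=n+1}\bigl(R_ny_i\otimes y_j+y_j\otimes R_ny_i\bigr)$. Setting $j=m+1$, so that $n=m+i$, turns these into $\sum_{m\ge0}\sum_{i\ge1}\bigl(R_{m+i}y_i\otimes y_{m+1}+y_{m+1}\otimes R_{m+i}y_i\bigr)$, which is precisely the inner sum in \eqref{eq:reduced-com}.

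Similarly, for $n\ge1$ the primitivity of $\Psi^{(n)}$ gives
\[
\Delta_*([y_{n+1},\Psi^{(n)}])
=[y_{n+1},\Psi^{(n)}]\otimes1+1\otimes[y_{n+1},\Psi^{(n)}]
+\sum_{\substack{i+j=n+1\\ i,j\ge1}}\bigl([y_i,\Psi^{(n)}]\otimes y_j+y_i\otimes[y_j,\Psi^{(n)}]\bigr).
\]

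\emph{Conclusion.} Collect all terms of the form $f\otimes1+1\otimes f$ on the right-hand side. They are $f\otimes1+1\otimes f$ with
\[
f=-\sum_{n\ge0}R_ny_{n+1}-\sum_{n\ge1}[y_{n+1},\Psi^{(n)}].
\]
By Lemma~\ref{lem:Phi-com}, $f=[y_1,\Psi^{(0)}]$, so these terms cancel against the primitive part on the left. What remains is exactly \eqref{eq:reduced-com}.

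The only delicate points are the index bookkeeping in the reindexing $n=m+i$ and keeping the signs and the order of tensor factors straight. Each sum is finite in each weight, so no convergence issue arises.
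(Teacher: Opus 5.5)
Your proposal is correct and follows the paper's argument: both apply the coproduct to \eqref{eq:Phi-com} and use the primitivity of $R_n$, the primitivity of $\Psi^{(n)}$ for $n\ge1$, and \eqref{eq:reduced-phi-Phi0}, with the same reindexing $n=m+i$. The only cosmetic difference is that you apply $\Delta_*$ and then cancel the primitive parts by invoking Lemma~\ref{lem:Phi-com} a second time, whereas the paper applies the linear map $\widetilde{\Delta}_*$ directly, which drops those parts automatically.
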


\begin{proof}
By Lemma~\ref{lem:Phi-com}, we have
\[
\sum_{n\geq0}[y_{n+1},\Psi^{(n)}]
=
-\sum_{n\geq0}R_ny_{n+1}.
\]
Applying $\widetilde{\Delta}_*$ to \eqref{eq:Phi-com}, we obtain
\begin{align*}
\sum_{n\geq0}
\widetilde{\Delta}_*
([y_{n+1},\Psi^{(n)}])
=
-\sum_{n\geq0}
\widetilde{\Delta}_*(R_ny_{n+1}).
\end{align*}

For $n=0$, since $\Delta_*(y_1)=y_1\otimes1+1\otimes y_1,$
we have
\begin{align*}
\widetilde{\Delta}_*([y_1,\Psi^{(0)}])
&\hspace{1ex}=
[\Delta_*(y_1),
\widetilde{\Delta}_*(\Psi^{(0)})]\\
&\stackrel{\eqref{eq:reduced-phi-Phi0}}{=}
[\Delta_*(y_1),
\widetilde{\Delta}_*(\pi_Y(\psi))].
\end{align*}

For $n\geq1$, since
\begin{align*}
\Delta_*(\Psi^{(n)})
=&
\Psi^{(n)}\otimes1+1\otimes\Psi^{(n)},\\
\Delta_*(y_{n+1})
=&
y_{n+1}\otimes1
+
1\otimes y_{n+1}
+
\sum_{\substack{i+j=n+1\\ i,j\geq1}}
y_i\otimes y_j
\end{align*}
holds, we have
\begin{align*}
\widetilde{\Delta}_*([y_{n+1}, \Psi^{(n)}]) =&[\widetilde{\Delta}_*(y_{n+1}), \Delta_*(\Psi^{(n)})] \\
=&\sum_{\substack{i + j = n + 1 \\ i,j >0}} [y_i \otimes y_j, \Psi^{(n)} \otimes 1 + 1\otimes \Psi^{(n)}]\\
=&\sum_{\substack{i + j = n + 1 \\ i,j >0}} [y_i , \Psi^{(n)}] \otimes y_j + y_i \otimes [y_j , \Psi^{(n)}].
\end{align*}

On the other hand,
\[
\widetilde{\Delta}_*(R_ny_{n+1})
=
\Delta_*(R_ny_{n+1})
-
R_ny_{n+1}\otimes1
-
1\otimes R_ny_{n+1}.
\]
Since, by Lemma~\ref{lem:Kn-prim}, $\Delta_*(R_n) = R_n \otimes 1 + 1\otimes R_n$ holds, we have
\begin{align*}
&
\Delta_*(R_ny_{n+1})
-
R_ny_{n+1}\otimes1
-
1\otimes R_ny_{n+1}
\\
={}&
R_n\otimes y_{n+1}
+
y_{n+1}\otimes R_n
\\
&+
\sum_{\substack{i+j=n+1\\ i,j\geq1}}
\left(
R_ny_i\otimes y_j
+
y_i\otimes R_ny_j
\right).
\end{align*}
Hence,
\begin{align*}
&
\sum_{n\geq0}
\left(
\Delta_*(R_ny_{n+1})
-
R_ny_{n+1}\otimes1
-
1\otimes R_ny_{n+1}
\right)
\\
={}&
\sum_{n\geq0}
\left(
R_n+\sum_{i\geq1}R_{n+i}y_i
\right)\otimes y_{n+1}
\\
&+
\sum_{n\geq0}
y_{n+1}\otimes
\left(
R_n+\sum_{i\geq1}R_{n+i}y_i
\right).
\end{align*}
Combining these identities gives the assertion.
\end{proof}

\begin{lem}\label{lem:com-red}
For $\Psi=[x_1,\psi]\in\Fadred$ and $k\in\mathbb Z_{\ge0}$, we have
\begin{align}
\Psi^{(k)}
+\sum_{i\ge1}y_i\Psi^{(k+i)}
=
\Psi_{(k)}
+\sum_{i\ge1}\Psi_{(k+i)}y_i.
\label{eq:com-red}
\end{align}
\end{lem}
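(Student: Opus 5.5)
The plan is to turn both sides of \eqref{eq:com-red} into images under $\pi_Y$ of words of $\psi$ obtained by deleting prefixes and suffixes. The adjoint reduction relations \eqref{eq:Fad} will then be exactly what makes the two deletions agree. Throughout, we use $\langle\psi\mid 1\rangle=\langle\psi\mid x_0\rangle=\langle\psi\mid x_1\rangle=0$ and $\Psi=x_1\psi-\psi x_1$.

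\emph{Step 1: rewrite the left-hand side.} A word of $\psi x_1$ beginning with $x_0^{m}x_1$, say $x_0^m x_1 v$, contributes $\pi_Y(v)$ to $\Psi^{(m)}$. Hence the term $y_i\Psi^{(k+i)}$ collects the words $x_0^{k}\cdot x_0^{i}x_1 v$ and sends them to $\pi_Y(x_0^{i-1}x_1 v)$. Introduce the left deletion operators $\partial^{\mathrm{L}}_{u}$, with $\partial^{\mathrm{L}}_u(uw)=w$ and zero on words not beginning with $u$. The left-hand side then becomes
\[
\pi_Y\Bigl(\partial^{\mathrm{L}}_{x_0^k x_1}(\psi x_1)+\partial^{\mathrm{L}}_{x_0^{k+1}}(\psi x_1)\Bigr)
=\pi_Y\Bigl(\partial^{\mathrm{L}}_{x_0^{k}}(\psi)\,x_1\Bigr)
\]
after the $x_0^k x_1$ and $x_0^{k+1}$ pieces are recombined. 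Here the last $x_1$ of $\psi x_1$ is what makes $\pi_Y$ nonvanishing. The empty-word and linear boundary cases for $k=0$ are handled by the vanishing of the low-weight coefficients of $\psi$.

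\emph{Step 2: rewrite the right-hand side.} Symmetrically, a word $w\,x_0^{m}x_1$ of $\psi x_1$ with $w$ ending in $x_1$ contributes $\pi_Y(w)$ to $\Psi_{(m)}$. The terms $\Psi_{(k+i)}y_i$ collect the words $w\,x_0^{i-1}x_1\cdot x_0^{k}x_1$. So the right-hand side should be $\pi_Y$ of the right deletion of the suffix $x_0^{k}x_1$ from $\psi x_1$, i.e. of $\psi$ with the suffix $x_0^k$ deleted, restricted to words ending in $x_1$. In both steps I would track the words of $\psi$ by their first letter and their last letter separately, so that everything is expressed through the four pieces $\psi^{ij}$ with $\psi=\sum x_i\psi^{ij}x_j$.

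\emph{Step 3: compare using \eqref{eq:Fad}.} Write
\[
\Psi^{11}+\Psi^{10}+\Psi^{01}
\]
in terms of $\psi^{ij}$. Using $\Psi=x_1\psi-\psi x_1$, one finds that $\Psi^{1j}$ involves $\psi$ with its first letter stripped and $\Psi^{i1}$ involves $\psi$ with its last letter stripped. The relation \eqref{eq:Fad} should therefore say that ``deleting a letter from the left'' and ``deleting a letter from the right'' of $\psi$ agree modulo words beginning and ending with $x_0$. I would then iterate this relation $k$ times to move the block $x_0^k$ from the front to the back, and apply $\pi_Y(\,\cdot\,x_1)$. This kills every word ending in $x_0$ and yields \eqref{eq:com-red}.

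I expect the main obstacle to be this last step: extracting from \eqref{eq:Fad} a usable identity that can be iterated in $k$. The cleanest route is probably a generating series in $T$ as in $\pi_{Y,\#}$, encoding all $k$ at once so that the iteration becomes a single identity. Alternatively, one can induct on $k$ by applying $\partial^{\mathrm{L}}_{x_0}$ and the analogous right derivative to \eqref{eq:Fad}; these operators preserve the form of the relation.
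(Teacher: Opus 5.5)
There is a genuine gap in Step~3. Steps~1--2 also contain bookkeeping errors that hide where the gap lies.

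In Step~1 the recombination loses a letter. One has $\partial^{\mathrm{L}}_{x_0^kx_1}+\partial^{\mathrm{L}}_{x_0^{k+1}}=\partial^{\mathrm{L}}\circ\partial^{\mathrm{L}}_{x_0^k}$, where $\partial^{\mathrm{L}}$ deletes the first letter whatever it is. So the left-hand side of \eqref{eq:com-red} is $\pi_Y\bigl(\partial^{\mathrm{L}}\partial^{\mathrm{L}}_{x_0^k}(\psi)\,x_1\bigr)$, not $\pi_Y\bigl(\partial^{\mathrm{L}}_{x_0^k}(\psi)\,x_1\bigr)$; for $k=0$ the latter is $\pi_Y(\psi x_1)$, whose weight is one too large. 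Your Step~2 recovers only $\Psi_{(k)}$. The terms $\Psi_{(k+i)}y_i$ come from the words of $\psi$ ending in $x_0^{k+1}$, and the right-hand side is $\pi_Y\bigl(\partial^{\mathrm{R}}\partial^{\mathrm{R}}_{x_0^k}(\psi)\,x_1\bigr)$. Since $f\mapsto\pi_Y(fx_1)$ is injective, \eqref{eq:com-red} is equivalent to $\partial^{\mathrm{L}}(\partial^{\mathrm{L}}_{x_0})^k\psi=\partial^{\mathrm{R}}(\partial^{\mathrm{R}}_{x_0})^k\psi$. Moreover $\Psi^{11}+\Psi^{10}+\Psi^{01}=\partial^{\mathrm{R}}\psi-\partial^{\mathrm{L}}\psi$ exactly; no ``modulo'' is needed, as $\Psi^{00}=0$. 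So \eqref{eq:Fad} is precisely the case $k=0$.

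The missing idea is the passage to $k\ge1$. It cannot come from \eqref{eq:Fad} by iteration or by one-sided deletions, because it requires that $\psi$ be a Lie element, and your argument never uses this beyond low weights. For example, $\psi=x_0^2x_1^2+x_0x_1^2x_0+x_1^2x_0^2+x_1x_0^2x_1$ satisfies $\partial^{\mathrm{L}}\psi=\partial^{\mathrm{R}}\psi$ and has no terms of weight $<4$. Yet for $k=1$ the two sides of \eqref{eq:com-red} are $y_1y_2+y_1^3$ and $y_2y_1+y_1^3$. Also, $\partial^{\mathrm{L}}_{x_0}\psi$ no longer satisfies the relation: $\partial^{\mathrm{L}}\partial^{\mathrm{L}}_{x_0}\psi=x_1^2+x_1x_0\neq x_1^2+x_0x_1=\partial^{\mathrm{R}}\partial^{\mathrm{L}}_{x_0}\psi$.

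The needed input is the two-sided derivation $\partial_{x_0}$ ($x_0\mapsto1$, $x_1\mapsto0$), which kills $\psi$ because $\psi\in\F_2$ has no linear term. One has $[\partial^{\mathrm{L}}_{x_0},\partial_{x_0}]=(\partial^{\mathrm{L}}_{x_0})^2$ and $[\partial^{\mathrm{L}},\partial_{x_0}]=\partial^{\mathrm{L}}\partial^{\mathrm{L}}_{x_0}$. Hence $\partial_{x_0}\partial^{\mathrm{L}}(\partial^{\mathrm{L}}_{x_0})^k\psi=-(k+1)\,\partial^{\mathrm{L}}(\partial^{\mathrm{L}}_{x_0})^{k+1}\psi$, and symmetrically on the right. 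Applying $\partial_{x_0}$ to the case $k$ therefore gives the case $k+1$.

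Through $D\circ\pi_Y=\pi_Y\circ\partial_{x_0}$ on $\Q\langle X\rangle x_1$, this is exactly the paper's proof. There the case $k=0$ is read off from \eqref{eq:Fad}. The induction applies $D$, using $D(\Psi^{(k)})=-(k+1)\Psi^{(k+1)}$ and $D(\Psi_{(k)})=-(k+1)\Psi_{(k+1)}$, both consequences of $D(\pi_Y(\psi x_1))=0$ (Lemma~\ref{lem:D-rho}).
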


\begin{proof}
We first consider the case $k=0$.
Since $\Psi\in\Fadred$, we have
\[
\Psi^{11}+\Psi^{10}+\Psi^{01}=0.
\]

On the other hand, by definition of $\Psi^{(n)}$ and $\Psi_{(n)}$, we have
\begin{align*}
\pi_Y(\Psi^{11}x_1) = \Psi_{(0)} - \Psi^{(0)},\quad \pi_Y(\Psi^{01}x_1) = - \sum_{i\ge 1}y_i \Psi^{(i)}, \quad \pi_Y(\Psi^{10}x_1) = \sum_{i\ge 1} \Psi_{(i)}y_i.
\end{align*}
These imply the case for $k = 0$. 

Assume that \eqref{eq:com-red} holds for some $k\ge 0$. Since
\begin{align*}
0 = D(\pi_Y(\psi x_1)) = \sum_{n\ge 0} ny_{n}\Psi^{(n)} + y_{n+1}D(\Psi^{(n)})\\
0 = D(\pi_Y(\psi x_1)) = \sum_{n\ge 0} n\Psi_{(n)}y_n + D(\Psi_{(n)})y_{n+1}
\end{align*}
holds, we have
\begin{align*}
D(\Psi^{(k)}) =& -(k + 1) \Psi^{(k + 1)}\\
D(\Psi_{(k)}) =& -(k + 1) \Psi_{(k + 1)}
\end{align*}
Thus, applying D to \eqref{eq:com-red}, we have
\begin{align*}
D\text{(LHS of  \eqref{eq:com-red})} =&
D\left(\Psi^{(k)} + \sum_{i\ge 1} y_i \Psi^{(k+i)}\right)\\
=& - (k+1)\Psi^{(k+1)} + \sum_{i\ge 1} (i - 1)y_{i-1} \Psi^{(k + i)} - \sum_{i\ge 1} (k + i + 1)y_{i} \Psi^{(k + i + 1)}\\
=&-( k +1)\left(\Psi^{(k+1)} + \sum_{i\ge 1} y_{i} \Psi^{(k + i + 1)} \right)
\end{align*}
and
\begin{align*}
D\text{(RHS of \eqref{eq:com-red})}
&=D\left(\Psi_{(k)}+\sum_{i\ge1}\Psi_{(k+i)}y_i\right)\\
&=-(k+1)\Psi_{(k+1)}
+\sum_{i\ge2}(i-1)\Psi_{(k+i)}y_{i-1}
-\sum_{i\ge1}(k+i+1)\Psi_{(k+i+1)}y_i\\
&=-(k+1)\left(
\Psi_{(k+1)}+\sum_{i\ge1}\Psi_{(k+i+1)}y_i
\right).
\end{align*}
Dividing by $-(k+1)$, we obtain \eqref{eq:com-red} for $k+1$.
\end{proof}

\begin{lem}\label{lem:y1y1}
For $\Psi = [x_1,\psi] \in \addmr_0 \cap \Fadred$, we have
\begin{align*}
[\Delta_*(y_1), \widetilde{\Delta}_*(\pi_Y(\psi))] = 0. \label{eq:y1y1}
\end{align*}
\end{lem}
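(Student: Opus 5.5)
The plan is to substitute Lemma~\ref{lem:com-red} into the right-hand side of \eqref{eq:reduced-com} from Lemma~\ref{lem:reduced-com} and show that the three lines there cancel. Since $\Psi\in\addmr_0$, Lemma~\ref{lem:reduced-com} applies, so it suffices to prove that this right-hand side vanishes.

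\emph{Step 1: rewrite the bracketed expressions.} Using $R_n=\Psi^{(n)}-\Psi_{(n)}$ for all $n$, we get for each $k\ge0$
\[
R_k+\sum_{i\ge1}R_{k+i}y_i
=\Psi^{(k)}+\sum_{i\ge1}\Psi^{(k+i)}y_i-\Bigl(\Psi_{(k)}+\sum_{i\ge1}\Psi_{(k+i)}y_i\Bigr).
\]
Since $\Psi\in\Fadred$, Lemma~\ref{lem:com-red} replaces the bracketed term by $\Psi^{(k)}+\sum_{i\ge1}y_i\Psi^{(k+i)}$. The $\Psi^{(k)}$ terms cancel, and we obtain
\[
R_k+\sum_{i\ge1}R_{k+i}y_i=-\sum_{i\ge1}[y_i,\Psi^{(k+i)}].
\]

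\emph{Step 2: reindex.} Insert this into the first line of \eqref{eq:reduced-com}. It becomes
\[
\sum_{k\ge0}\sum_{i\ge1}[y_i,\Psi^{(k+i)}]\otimes y_{k+1}.
\]
Put $m=k+i$ and $j=k+1$. The pair $(k,i)$ with $k\ge0$, $i\ge1$ corresponds bijectively to the triple $(m,i,j)$ with $m\ge1$, $i,j\ge1$ and $i+j=m+1$. So the first line equals
\[
\sum_{m\ge1}\sum_{\substack{i+j=m+1\\ i,j\ge1}}[y_i,\Psi^{(m)}]\otimes y_j .
\]
In the same way, the second line equals
\[
\sum_{m\ge1}\sum_{\substack{i+j=m+1\\ i,j\ge1}}y_i\otimes[y_j,\Psi^{(m)}]
\]
(swap the names of $i$ and $j$).

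\emph{Step 3: cancel.} These two sums are exactly the negatives of the two parts of the third line of \eqref{eq:reduced-com}. Hence the whole right-hand side is zero, and $[\Delta_*(y_1),\widetilde{\Delta}_*(\pi_Y(\psi))]=0$.

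The only point needing care is the index bookkeeping in Step~2. In particular, the $m\ge1$ range of the third line must match the $i\ge1$ restriction, and the $m=0$ (that is, $\Psi^{(0)}$) contribution must be absent; it is, because $i\ge1$ forces $m\ge1$. Convergence is not an issue, since everything is a finite sum in each weight.
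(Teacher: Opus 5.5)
Your proof is correct and follows the paper's argument essentially verbatim. Like the paper, you rewrite $R_k+\sum_{i\ge1}R_{k+i}y_i$ as $-\sum_{i\ge1}[y_i,\Psi^{(k+i)}]$ using $R_n=\Psi^{(n)}-\Psi_{(n)}$ and Lemma~\ref{lem:com-red}, then reindex by $n=k+i$ so that the first two lines of \eqref{eq:reduced-com} cancel the third. Your explicit bijection $(k,i)\leftrightarrow(m,i,j)$ just spells out the paper's one-line reindexing more carefully.
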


\begin{proof}
By the definition of $R_n$, we have
\[
R_n=\Psi^{(n)}-\Psi_{(n)}.
\]
Thus,
\begin{align*}
R_k+\sum_{i\geq1}R_{k+i}y_i
={}&
\Psi^{(k)}
+\sum_{i\geq1}\Psi^{(k+i)}y_i
-
\Psi_{(k)}
-
\sum_{i\geq1}\Psi_{(k+i)}y_i.
\end{align*}
By Lemma~\ref{lem:com-red},
\[
\Psi_{(k)}
+
\sum_{i\geq1}\Psi_{(k+i)}y_i
=
\Psi^{(k)}
+
\sum_{i\geq1}y_i\Psi^{(k+i)}.
\]
Therefore,
\begin{align*}
R_k+\sum_{i\geq1}R_{k+i}y_i
&=
\sum_{i\geq1}
\left(
\Psi^{(k+i)}y_i-y_i\Psi^{(k+i)}
\right)\\
&=
-\sum_{i\geq1}[y_i,\Psi^{(k+i)}].
\end{align*}
It follows that
\begin{align*}
&
\sum_{n\ge 0} \left(R_n + \sum_{i\ge 1} R_{n + i}y_i \right) \otimes y_{n + 1} +\sum_{n\ge 0} y_{n + 1} \otimes \left(R_n + \sum_{i\ge 1} R_{n + i}y_i \right) \\
={}&
-\sum_{k\geq0}\sum_{i\geq1}
[y_i,\Psi^{(k+i)}]\otimes y_{k+1}
-
\sum_{k\geq0}\sum_{i\geq1}
y_{k+1}\otimes[y_i,\Psi^{(k+i)}]
\\
={}&
-\sum_{n\geq1}
\sum_{\substack{i+j=n+1\\i,j\geq1}}
\left(
[y_i,\Psi^{(n)}]\otimes y_j
+
y_i\otimes[y_j,\Psi^{(n)}]
\right),
\end{align*}
where in the last equality we reindex by $n=k+i$.

Consequently, the right hand side of \eqref{eq:reduced-com} is canceled, and we conclude
\[
[\Delta_*(y_1),\widetilde{\Delta}_*(\pi_Y(\psi))]=0.
\]
\end{proof}

\begin{lem}\label{lem:kery1y1}
We have
\[
\ker \ad(\Delta_*(y_1)) = \Q\langle y_1 \rangle^{\otimes 2}.
\]
\end{lem}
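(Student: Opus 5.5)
We need to show that the centralizer of $\Delta_*(y_1)=y_1\otimes1+1\otimes y_1$ in $\Q\langle Y\rangle^{\otimes 2}$ (and, by continuity, in the completion) is exactly $\Q\langle y_1\rangle^{\otimes 2}=\Q[y_1]\otimes\Q[y_1]$. The inclusion $\supset$ is immediate: $y_1\otimes 1$ and $1\otimes y_1$ commute with every element of $\Q[y_1]\otimes\Q[y_1]$. For the reverse inclusion, write $L:=\ad(y_1\otimes 1)$ and $L':=\ad(1\otimes y_1)$. These are commuting operators with $\ad(\Delta_*(y_1))=L+L'$, and on $\Q\langle Y\rangle\otimes\Q\langle Y\rangle$ they act as $\ad(y_1)\otimes\id$ and $\id\otimes\ad(y_1)$.

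\textbf{Grading.} The first step is to grade $\Q\langle Y\rangle$ by the number $N(w)$ of letters different from $y_1$ in a word $w$. The operator $\ad(y_1)$ preserves this grading, so the problem splits into bihomogeneous pieces indexed by $(N_1,N_2)$. Within each piece we additionally grade by total length, which $\ad(y_1)$ raises by one.

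\textbf{Structure of $\ad(y_1)$ in one variable.} A word with $N(w)=m$ has the form
\[
w=y_1^{a_0}z_1y_1^{a_1}z_2\cdots z_my_1^{a_m},
\]
with the $z_i\neq y_1$. Fixing the sequence $(z_1,\dots,z_m)$, this span is isomorphic to $\Q[t_0,\dots,t_m]$, with $w\mapsto t_0^{a_0}\cdots t_m^{a_m}$. Under this isomorphism left multiplication by $y_1$ is multiplication by $t_0$ and right multiplication is multiplication by $t_m$, so $\ad(y_1)$ is multiplication by $(t_0-t_m)$.

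\textbf{Tensor square.} On the tensor square, for a fixed pair of sequences, the piece is a polynomial ring $\Q[t_0,\dots,t_m,s_0,\dots,s_{m'}]$, and $L+L'$ acts as multiplication by
\[
(t_0-t_m)+(s_0-s_{m'}).
\]
Whenever $(m,m')\neq(0,0)$ this is a nonzero polynomial. Indeed, if $m\ge1$ then $t_0\neq t_m$ are distinct variables, and similarly if $m'\ge1$. Since a polynomial ring is an integral domain, multiplication by a nonzero element is injective. Hence the kernel of $\ad(\Delta_*(y_1))$ is concentrated in the piece $m=m'=0$, which is exactly $\Q[y_1]\otimes\Q[y_1]$.

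\textbf{Completion.} For elements of the completion, $\ad(\Delta_*(y_1))$ is homogeneous for the weight, so it suffices to apply the argument componentwise in each weight.

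\textbf{Main obstacle.} The only delicate point is verifying that the identification with polynomial rings is compatible with both one-sided multiplications. This reduces to the observation that inserting $y_1$ at either end only changes the exponents $a_0$ or $a_m$; when $m=0$ both ends coincide, which is why that piece is killed.
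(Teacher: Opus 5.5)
Your proof is correct, but it takes a different route from the paper's.

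The paper splits $\Q\langle Y\rangle=\Q\langle y_1\rangle\oplus W$, where $W$ is spanned by the words containing a letter other than $y_1$. It quotes the one-variable fact $\ker\ad(y_1)=\Q\langle y_1\rangle$ to get injectivity of $\ad(y_1)$ on $W$. It then treats the four induced summands of $\Q\langle Y\rangle^{\otimes 2}$ separately. On the mixed summands only one of $\ad(y_1)\otimes\id$, $\id\otimes\ad(y_1)$ survives, so injectivity is immediate. On $W^{\otimes 2}$ it uses a leading-term argument: the component of left weight $m+1$ of $\ad(\Delta_*(y_1))(w)$ is $(\ad(y_1)\otimes\id)(w_m)\neq 0$.

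You instead refine the decomposition by the ordered sequences of non-$y_1$ letters in each tensor factor. Each block becomes a polynomial ring on which $\ad(\Delta_*(y_1))$ is multiplication by the linear form $(t_0-t_m)+(s_0-s_{m'})$. Injectivity away from the $(0,0)$ block then follows uniformly from the integral-domain property. This removes both the case split into four summands and the leading-term argument, and the one-variable centralizer fact comes out as the special case $m'=0$ rather than being assumed.

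So your argument is fully self-contained and gives an explicit model of the operator. The paper's is shorter given the quoted fact, and more abstract: it only uses that $\ad(y_1)$ kills $\Q\langle y_1\rangle$, is injective on an invariant complement, and raises weight.

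One small correction to your opening remark about the completion. In $\Q\langle\langle Y\rangle\rangle^{\widehat{\otimes}2}$ the kernel is the closure $\Q[[y_1]]\widehat{\otimes}\Q[[y_1]]$, not $\Q\langle y_1\rangle^{\otimes 2}$ itself; for instance $\exp(y_1)\otimes 1$ commutes with $\Delta_*(y_1)$. This is exactly what your componentwise argument gives. It does not affect the lemma, which, like its application to $\widetilde{\Delta}_*(\pi_Y(\psi))$ in the paper, concerns the polynomial version.
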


\begin{proof}
Let $W\subset\Q\langle Y\rangle$ be the span of the words containing
a letter other than $y_1$. Since
$\ker\ad(y_1)=\Q\langle y_1\rangle$ and $\ad(y_1)(W)\subset W$,
the restriction $\ad(y_1)|_W$ is injective.
Moreover,
\[
\ad(\Delta_*(y_1))
=\ad(y_1)\otimes\id+\id\otimes\ad(y_1)
\]
preserves the decomposition
\[
\Q\langle Y\rangle^{\otimes 2}
=\Q\langle y_1\rangle^{\otimes 2}
\oplus(\Q\langle y_1\rangle\otimes W)
\oplus(W\otimes\Q\langle y_1\rangle)
\oplus W^{\otimes 2}.
\]
It vanishes on the first summand and is injective on the two mixed
summands by the injectivity of $\ad(y_1)|_W$.

For a nonzero $w\in W^{\otimes 2}$, let $m$ be the largest weight
occurring in the left factor, and let $w_m\ne0$ be the corresponding
component. The component of $\ad(\Delta_*(y_1))(w)$ of left weight
$m+1$ is $(\ad(y_1)\otimes\id)(w_m)$, which is nonzero.
Thus the restriction to $W^{\otimes 2}$ is also injective,
proving the assertion.
\end{proof}

\begin{proof}[Proof of the surjectivity of $\ad(x_1) : \dmr_0 \rightarrow \addmr_0 \cap \Fadred$]

Conditions \eqref{df:ds-1}--\eqref{df:ds-3} of Definition~\ref{df:ds} for $\psi$ follow from the choice of $\psi$ and Definition~\ref{df:addmr}~(1).
It is well known (cf.~\cite[\S~4]{Hoffman}) that there exists a unique commutative bilinear binary operation $*$ on $\Q\langle Y \rangle$ such that
\begin{align}
\langle \Delta_*(f) \mid u \otimes v \rangle = \langle f \mid u * v \rangle \label{eq:harmonic-du}
\end{align}
holds for $f \in \Q\langle Y \rangle$ and $u, v \in Y^*\setminus\{1\}$.
Since we have $\widetilde{\Delta}_*(\pi_Y(\psi)) \in \Q\langle y_1 \rangle^{\otimes 2}$ by Lemma~\ref{lem:y1y1} and Lemma~\ref{lem:kery1y1}, it follows that
\begin{align}
0 = \langle \Delta_*(\pi_Y(\psi)) \mid u\otimes v \rangle = \langle \pi_Y(\psi) \mid u * v\rangle \label{eq:harmonic}
\end{align}
for $u, v\in Y^*\setminus\{1\}$ unless both $u$ and $v$ are powers of $y_1$.
Thus, to prove $\psi \in \dmr_0$, it suffices to show that
\[
\langle\Delta_*(\psi_{*}) \mid y_1^m \otimes y_1^n \rangle = 0
\]
for all $m, n \in \mathbb{Z}_{>0}$.

By the coassociativity of $\Delta_*$, we have
\begin{align*}
&n\langle \Delta_*(\pi_Y(\psi)) \mid y_1^{m} \otimes y_1^n \rangle\\
=&\langle (\Delta_* \otimes \id) \circ \Delta_*(\pi_Y(\psi)) \mid y_1^{m} \otimes y_1^{n-1} \otimes y_1 \rangle\\
=&\langle (\id \otimes \Delta_*) \circ \Delta_*(\pi_Y(\psi)) \mid y_1^{m} \otimes y_1^{n-1} \otimes y_1 \rangle\\
=&\binom{m + n -1}{m}\langle \Delta_*(\pi_Y(\psi)) \mid y_1^{m + n - 1} \otimes y_1\rangle.
\end{align*}
Here, we use Lemma~\ref{lem:y1y1} and \ref{lem:kery1y1} for calculating first and third equalities.
Therefore, it suffices to determine
\[
\langle \Delta_*(\pi_Y(\psi)) \mid y_1^{l} \otimes y_1\rangle
\]
for $l \in \mathbb{Z}_{>0}$.
By the definition of $*$ (cf.~\cite[\S2]{Hoffman}) and \eqref{eq:harmonic}, we have, for $m \in \mathbb{Z}_{\ge 2}$,
\[
0 = \langle\pi_Y(\psi) \mid y_1^l * y_m \rangle = \sum_{ i+ j = l} \langle \pi_Y(\psi) \mid y_1^i y_m y_1^j \rangle + \sum_{ i+ j = l-1} \langle \pi_Y(\psi) \mid y_1^i y_{m+1} y_1^j \rangle,
\]
that is,
\begin{align}
 \sum_{ i+ j = l} \langle \pi_Y(\psi) \mid y_1^i y_m y_1^j \rangle = - \sum_{ i+ j = l-1} \langle \pi_Y(\psi) \mid y_1^i y_{m+1} y_1^j \rangle \label{eq:har-iter}
\end{align}
for $m \in \mathbb{Z}_{\ge 2}$.
By iterating \eqref{eq:har-iter}, we obtain
\begin{align*}
& \langle \Delta_*(\pi_Y(\psi)) \mid y_1^{l} \otimes y_1\rangle \\
\stackrel{\eqref{eq:harmonic-du}}{=}  & \langle \pi_Y(\psi) \mid y_1^l * y_1 \rangle\\
=\hspace{0.2cm}&(l + 1)\langle \pi_Y(\psi) \mid y_1^{l+1} \rangle + \sum_{i + j = l - 1} \langle \pi_Y(\psi) \mid y_1^i y_2 y_1^j \rangle\\
=\hspace{0.2cm}&\sum_{i + j = l-1} \langle \pi_Y(\psi) \mid y_1^i y_2 y_1^j \rangle\\
\stackrel{\eqref{eq:har-iter}}{=}& -\sum_{i + j = l-2} \langle \pi_Y(\psi) \mid y_1^i y_3 y_1^j \rangle \\
\vdots&\\
=\hspace{0.2cm}&(-1)^{l+1} \langle \pi_Y(\psi) \mid y_{l+1} \rangle.
\end{align*}
Here, in the third equality, we use
\[
\langle \pi_Y(\psi) \mid y_1^{l +1} \rangle = \langle \psi \mid x_1^{l+1} \rangle = 0,
\]
which comes from the property of $\psi \in \ide{F}_2$ and $\langle \psi \mid x_1 \rangle = 0$.

On the other hand, we have
\begin{align*}
\langle \Delta_*(\psi_{\corr}) \mid y_1^l \otimes y_1 \rangle =&\left\langle \Delta_*\left(\sum_{m\ge 2} \frac{(-1)^{m-1}}{m} \langle \pi_Y(\psi) \mid y_m \rangle y_1^m \right) \middle| y_1^l \otimes y_1 \right\rangle\\
=& \frac{(-1)^l}{l+1} \binom{l + 1}{1} \langle \pi_Y(\psi) \mid y_{l + 1} \rangle\\
=&(-1)^{l}  \langle \pi_Y(\psi) \mid y_{l + 1} \rangle.
\end{align*}
Thus, by the coassociativity again, we have, for $m, n \in \mathbb{Z}_{>0}$, 
\begin{align*}
&\langle \Delta_*(\pi_Y(\psi)) +  \Delta_*(\psi_{\corr}) \mid y_1^m \otimes y_1^n \rangle \\
=&\binom{m + n-1}{m}n^{-1} \left(\langle  \Delta_*(\pi_Y(\psi)) +  \Delta_*(\psi_{\corr}) \mid y_1^{m + n -1} \otimes y_1 \rangle\right)\\
=&\binom{m + n-1}{m}n^{-1}\left( (-1)^{m + n} \langle \pi_Y(\psi) \mid y_{m + n} \rangle + (-1)^{m + n -1}  \langle \pi_Y(\psi) \mid y_{m + n} \rangle \right)\\
=&0.
\end{align*}
This proves the surjectivity of $\ad(x_1) : \dmr_0 \rightarrow \addmr_0 \cap \Fadred$.
\end{proof}

\begin{proof}[Proof of $\ad(x_1)(\dmr_0) \subset \Fadred$]
Let $\psi\in\dmr_0$ and put $\Psi=[x_1,\psi]$.
Since $\Psi\in\addmr_0$ by \cite[Proposition~3.2.5~(ii)]{Jarossay},
it suffices to show that $\Psi^{11}+\Psi^{10}+\Psi^{01}=0$.
By Lemma~\ref{lem:Kn-prim}, we have
\[
\Delta_*(R_n)=R_n\otimes1+1\otimes R_n \qquad (n\ge0).
\]

Since $\Delta_*(\psi_*)=\psi_*\otimes1+1\otimes\psi_*$, we have
$\widetilde{\Delta}_*(\pi_Y(\psi))=-\widetilde{\Delta}_*(\psi_{\corr})$,
which lies in $\Q\langle y_1\rangle^{\otimes2}$ and hence commutes with $\Delta_*(y_1)$.
Thus the left-hand side of \eqref{eq:reduced-com} vanishes.
On the other hand, substitute $R_n=\Psi^{(n)}-\Psi_{(n)}$ into the right-hand side of \eqref{eq:reduced-com}, and reindex its last sum as in the proof of Lemma~\ref{lem:y1y1}.
Then the vanishing of the right-hand side of \eqref{eq:reduced-com} yields
\begin{align}
\begin{aligned}
&\sum_{n\ge 0} \left(\Psi_{(n)} + \sum_{i\ge 1} \Psi_{(n + i)} y_i  \right) \otimes y_{n + 1} + \sum_{n\ge 0} y_{n + 1} \otimes \left(\Psi_{(n)} + \sum_{i\ge 1} \Psi_{(n + i)} y_i  \right)\\
=&\sum_{n\ge 0} \left(\Psi^{(n)} + \sum_{i\ge 1} y_i \Psi^{(n + i)}  \right) \otimes y_{n + 1} + \sum_{n\ge 0} y_{n + 1} \otimes \left(\Psi^{(n)} + \sum_{i\ge 1} y_i\Psi^{(n + i)}  \right).
\end{aligned}
 \label{eq:comp-tensor}
\end{align} 
Comparing $\mathchar`-\otimes y_{1}$ of \eqref{eq:comp-tensor} imply
\begin{align*}
\Psi_{(0)} + \sum_{i\ge 1} \Psi_{(i)} y_i + \sum_{n\ge 0}  \langle \Psi_{(n)} \mid y_1 \rangle y_{n+1} 
=\Psi^{(0)} + \sum_{i\ge 1} y_i \Psi^{(i)} + \sum_{n\ge 0}  \langle \Psi^{(n)} \mid y_1 \rangle y_{n + 1} .
\end{align*}
Here, since $\psi \in \ide{F}_2$, $S_X(\psi) = - \psi$ holds. Therefore, we have
\[
\langle \Psi^{(n)} \mid y_1 \rangle = \langle \psi \mid x_0^nx_1 \rangle = - \langle S_X(\psi) \mid x_0^nx_1 \rangle = (-1)^{n} \langle \psi \mid x_1x_0^n \rangle = (-1)^n \langle \Psi_{(n)} \mid y_1 \rangle .
\]
If $n$ is even, we have $\langle \Psi^{(n)} \mid y_1 \rangle = \langle \Psi_{(n)} \mid y_1 \rangle$. If $n$ is odd, then $\langle \psi \mid x_0^nx_1 \rangle = 0$ by \cite[Proposition~3.3.3]{Racinet}, and hence both sides vanish. These imply
\[
\sum_{n\ge 0}  \langle \Psi_{(n)} \mid y_1 \rangle y_{n+1}  = \sum_{n\ge 0}  \langle \Psi^{(n)} \mid y_1 \rangle y_{n + 1},
\]
and we have
\[
\Psi_{(0)} + \sum_{i\ge 1} \Psi_{(i)} y_i 
=\Psi^{(0)} + \sum_{i\ge 1} y_i \Psi^{(i)}.
\]
Since we already checked
\begin{align*}
\pi_Y(\Psi^{11}x_1) = \Psi_{(0)} - \Psi^{(0)},\quad \pi_Y(\Psi^{01}x_1) = - \sum_{i\ge 1}y_i \Psi^{(i)}, \quad \pi_Y(\Psi^{10}x_1) = \sum_{i\ge 1} \Psi_{(i)}y_i,
\end{align*}
we have
\[
\pi_Y(\Psi^{11}x_1 + \Psi^{10}x_1 + \Psi^{01}x_1) = 0.
\]

Since $\Q\langle X \rangle \ni f \mapsto\pi_Y(fx_1) \in \Q\langle Y \rangle$ is injective, we have 
\[
\Psi^{11} + \Psi^{10} + \Psi^{01} = 0.
\]
and this concludes $\Psi \in \Fadred$.
\end{proof}

\subsection*{Conclusion}

By considering the natural completion, $\widehat{\addmr_0} \cap \cFadred$ is a pronilpotent Lie algebra and its scalar extension
\[
R\cotimes \widehat{\addmr_0} \cap \cFadred
\]
is also a Lie algebra. Thus, by the exponential map,
\[
\exp^{\cast_1}(R\cotimes \widehat{\addmr_0}\cap\cFadred)
\]
is a prounipotent group equipped with the binary operation $\cast_1$ and the following diagram commutes:
\[
\xymatrix{
\DMR_0(R) \ar[r]^-{\Ad(x_1)} & \AdDMR_0(R) \cap \exp(\cFadred)(R) \\
\exp^{\cast}(R\cotimes \widehat{\dmr}_0) \ar[r]^-{\Ad(x_1)}_-{\sim} \ar@{=}[u] & \exp^{\cast_1}(R\cotimes \widehat{\addmr}_0 \cap \cFadred) \ar@{^(-^>}[u]\\
R\cotimes \widehat{\dmr}_0 \ar[u]^{\exp^{\cast}}_-{\rotatebox{90}{$\sim$}} \ar[r]_-{\ad(x_1)}^-{\sim} & R\cotimes \widehat{\addmr}_0 \cap \cFadred \ar[u]_{\exp^{\cast_1}}^-{\rotatebox{90}{$\sim$}}
}
\]
Here, the bottom square follows from Proposition~\ref{prop:ism} and the naturality of $\exp^{\cast}$ and $\exp^{\cast_1}$.
Thus, to prove Theorem~\ref{thm:ism}, it suffices to show
\[
\AdDMR_0(R) \cap \exp(\cFadred)(R)  \subset  \exp^{\cast_1}(R\cotimes (\widehat{\addmr}_0 \cap \cFadred)).
\]
 Let us take $\Phi \in \AdDMR_0(R) \cap \exp(\cFadred)(R)$. By \eqref{eq:exp-ad}, there exists $\Psi \in R\cotimes \cFadred$ such that
\[
\Phi = \exp^{\cast_1}(\Psi).
\]
By definition, it immediately verifies that
\[
\Psi \in R\cotimes \cFadred.
\]
and $\langle \Psi \mid w \rangle = 0$ for $\wt w = 1$, $2$, $3$.
Thus, it suffices to show
\[
\Delta_* (\Psi_{\#}) = \Psi_{\#} \otimes 1 + 1 \otimes \Psi_{\#}.
\]

Let
\[
\Psi = \sum_{n\ge 0} \Psi^{[n]}
\]
be the weight-homogeneous decomposition of $\Psi$ and put
\[
\Psi_n := \sum_{0\le m < n} \Psi^{[m]}.
\]
Since $\Psi^{[k]}  = 0$ for $1\le k \le 3$, we have $\Psi^{[k]} \in R\cotimes (\addmr_0 \cap \Fadred)$.
Assume that $\Psi^{[k]} \in R\cotimes (\addmr_0 \cap \Fadred)$ for positive integers $k$ less than $n \in \mathbb{Z}_{>3}$.
Since
\[
\exp^{\cast_1}(R\cotimes (\addmr_0 \cap \Fadred)) \subset \AdDMR_0 \cap \exp(\Fadred)(R)
\]
holds, we have
\[
\exp^{\cast_1}(\Psi_n) \in \AdDMR_0\cap \exp(\cFadred)(R).
\]
Since
\[
\exp^{\cast_1}(\Psi)_{\#} - \exp^{\cast_1}(\Psi_n)_{\#} = \Psi_{\#}^{[n]} + O(X^{n}),
\]
we have
\begin{align*}
\Delta_*(\exp^{\cast_1}(\Psi)_{\#} - \exp^{\cast_1}(\Psi_n)_{\#})
=&\exp^{\cast_1}(\Psi)_{\#}  \otimes \exp^{\cast_1}(\Psi)_{\#}  -  \exp^{\cast_1}(\Psi_n)_{\#} \otimes  \exp^{\cast_1}(\Psi_n)_{\#}\\
=&\Psi^{[n]}_{\#} \otimes 1 + 1\otimes \Psi^{[n]}_{\#} + O(X^{n}).
\end{align*}
Here, $O(X^{n})$ is the infinite linear combination of terms whose weight is larger than $n-1$ (note that $\pi_{Y,\#}$ maps weight $n$ terms of $\Q\langle\langle X\rangle\rangle$ to weight $n-1$ terms of $\Q[[T;Y\rangle\rangle$, so $\wt(\psi_{\#}^{[n]}) = n-1$).
Since the weight $n-1$-homogeneous part of the above are equal, thus we have
\[
\Delta_*(\Psi^{[n]}_{\#}) = \Psi^{[n]}_{\#} \otimes 1 + 1 \otimes \Psi^{[n]}_{\#}
\]
and get our conclusion.

\bibliographystyle{amsplain}

\bibliography{reference-ad-red}

\end{document}